\theoremstyle{plain}
    \newtheorem{theorem}{Theorem}[section] %
    \renewcommand{\thetheorem}%
    {\arabic{section}.\arabic{theorem}}
    \newtheorem{corollary}[theorem]{Corollary}
    \newtheorem{proposition}[theorem]{Proposition}
    \newtheorem{question}[theorem]{Question}
    \newtheorem{lemma}[theorem]{Lemma}
\theoremstyle{definition}
    \newtheorem{setup}[theorem]{}
    \newtheorem{set-up}[theorem]{Set-up}
    \newtheorem{remark}[theorem]{Remark}
    \newtheorem{assumption}[theorem]{Assumption}
    \newtheorem{rem-ass}[theorem]{Remark and Assumption}
    \newtheorem{claim}[theorem]{Claim}   
    \newtheorem{construction}[theorem]{Construction}
\numberwithin{equation}{section}
\renewcommand{\P}{\mathord{\mathbb P}}
\newcommand{\Q}{\mathord{\mathbb Q}}
\newcommand{\Z}{\mathord{\mathbb Z}}
\renewcommand{\AA}{\mathord{\mathcal A}}
\newcommand{\OO}{\mathord{\mathcal O}}
\newcommand{\A}{{\text{\rm A}}}
\newcommand{\DCC}{{\text{\rm DCC}}}
\newcommand{\Ell}{{\text{\rm Ell}}}
\newcommand{\klt}{\text{\rm klt}}
\newcommand{\lcm}{\text{\rm lcm}}
\newcommand{\Supp}{\text{\rm Supp}}
\begin{document}
\title[Effective Iitaka fibrations]{Effective Iitaka fibrations}
\author{Eckart Viehweg}
\address{
\textsc{Universit\"at Duisburg-Essen \endgraf
Fachbereich Mathematik \endgraf
45117 Essen, Germany}}
\email{viehweg@uni-due.de}
\author{De-Qi Zhang}
\address{
\textsc{Department of Mathematics} \endgraf
\textsc{National University of Singapore \endgraf
2 Science Drive 2, Singapore 117543, Singapore
}}
\email{matzdq@nus.edu.sg}
\begin{abstract}
For every $n$-dimensional projective manifold $X$ of Kodaira dimension $2$
we show that $\Phi_{|M K_X|}$ is birational to an Iitaka fibration
for a {\it computable} positive integer $M = M(b, B_{n-2})$,
where $b > 0$ is minimal with $|bK_F| \ne \emptyset$ for a general fibre $F$ 
of an Iitaka fibration of $X$, and where $B_{n-2}$ is the Betti number of a 
smooth model of the canonical $\Z/(b)$-cover of
the $(n-2)$-fold $F$. In particular, $M$ is a universal constant if the dimension $n \le 4$.
\end{abstract}
\subjclass[2000]
{14E05, 14E25, 14Q20, 14J10} 

\keywords{
Iitaka fibration, pluri-canonical fibration}

\thanks{This work has been supported by the DFG-Leibniz program and by the SFB/TR 45
``Periods, moduli spaces and arithmetic of algebraic varieties''.
The second author is partially supported by an academic research fund of NUS}
\maketitle
\raggedbottom
Building up on the work of H. Tsuji, C.D.~Hacon and J. McKernan in \cite{HM} and 
independently S. Takayama in \cite{Ta} have shown the existence of a constant $r_n$
such that $\Phi_{|mK_X|}$ is a birational map for every $m \ge r_n$
and for every complex projective $n$-fold $X$ of general type. 
\par
If the Kodaira dimension $\kappa=\kappa(X)$ is non-negative and $\kappa < n$, consider an Iitaka fibration $f : X \to Y$, i.e. a rational map onto a projective manifold $Y$ of dimension $\kappa$ with a connected general fibre $F$ of Kodaira dimension zero. We define the {\it index} $b$ of $F$ to be 
$$
b= \min \{b' > 0 \,\, | \,\, |b' K_{F}| \ne \emptyset\},
$$ 
and $B_{n-\kappa}$ to be the $(n-\kappa)$-th Betti number of
a nonsingular model of the $\Z/(b)$-cover of $F$, obtained by taking the $b$-th root 
out of the unique member in $|bK_{F}|$, or as we will say, the 
{\it middle Betti number} of the {\it canonical covering} of $F$. 
\begin{question}\label{quest}
Is there a constant $M := M(n, \kappa, b, B_{n-\kappa})$ such that
$\Phi_{|MK_X|}$ is (birational to) an Iitaka fibration
$f:X\to Y$ for all projective $n$-folds $X$ of Kodaira dimension $\kappa$?
\end{question}
Assume that for all $s \le n$ there exists an effective constant $a(s)$ 
such that for every projective $s$-fold $V$ of non-negative Kodaira dimension, one has $|a(s)K_V|\ne \emptyset$ and such that the dimension of $|a(s)K_V|$ is at least one if $\kappa(V)>0$. 
Then J. Koll\'ar gives in \cite[Th 4.6]{Ko86} a formula for the constant $M$ 
in \ref{quest} in terms of $a(s)$ and $n$.

Question \ref{quest} has been answered in the affirmative
by Fujino-Mori \cite{FM} for $\kappa = 1$. In this note we show that the answer
is also affirmative for $\kappa = 2$. 
\begin{theorem} \label{ThA}
Let $X$ be an $n$-dimensional projective manifold of Kodaira dimension $2$
with Iitaka fibration $f : X \to Y$. Then there exists 
a computable positive integer $M$ depending only on the index $b$ of a general fibre
$F$ of $f$ and on the middle Betti number $B_{n-2}$ of the canonical covering of $F$,
such that $\Phi_{|M K_X|}$ is birational to $f$.
\end{theorem}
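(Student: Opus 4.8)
The plan is to reduce the statement to the known case $\kappa = 1$ (Fujino–Mori \cite{FM}) by passing to a sub-Iitaka fibration, and then control the contribution from the base. Concretely, the Iitaka fibration $f:X\to Y$ has $\dim Y = 2$; I would like to fibre $Y$ further, so that $X$ maps to a curve whose general fibre has Kodaira dimension $1$, apply an effective statement there, and then run a relative argument over that curve to recover the full canonical map. But the cleanest route is to exploit the canonical bundle formula of Fujino–Mori in the relative setting: write, up to birational modification, $K_X \sim_{\mathbb Q} f^*(K_Y + \Delta) + f^*L + E$, where $\Delta$ is the discriminant (moduli) part, $L$ is the part coming from the variation of the fibres, and $E$ is $f$-exceptional/effective. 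The index $b$ and the middle Betti number $B_{n-2}$ enter precisely because $bL$ (or some controlled multiple $\beta L$ with $\beta = \beta(b,B_{n-2})$) is an honest integral divisor: this is exactly the content of the semistable-part estimates, where the denominators of $L$ are bounded in terms of the monodromy on $H^{n-2}$ of the canonical cover of $F$, hence in terms of $B_{n-2}$ and $b$.

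**Next I would** make the pair $(Y,\Delta)$ effective on the surface $Y$. Since $\kappa(X) = 2 = \dim Y$, the divisor $K_Y + \Delta + L$ is big on $Y$; moreover $(Y,\Delta)$ is klt with coefficients of $\Delta$ in a fixed DCC set whose denominators are again bounded by $b$ and $B_{n-2}$ (the standard fact that the multiplicities in the discriminant come from the local monodromies, which act on a lattice of rank $B_{n-2}$). Thus I need an effective very-ampleness / effective-birationality statement for adjoint divisors $K_Y + \Delta + (\text{big }\mathbb Q\text{-divisor})$ on surfaces with bounded denominators — this is available by the two-dimensional MMP together with effective results of Alexeev–Mori / Kollár type in dimension $2$, giving a bound $m_0 = m_0(b,B_{n-2})$ with $|m_0(K_Y+\Delta+L)|$ birational, after absorbing $L$ into the estimate using its bounded denominator. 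Pulling back and using the formula, I get that $|M K_X|$ with $M = b\cdot m_0$ (times a harmless further factor to clear $E$ and the fractional part of $f^*L$) separates the fibres of $f$ and is birational on $Y$, hence is birational to $f$.

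**The main obstacle** I expect is the bounding of the denominators and coefficients appearing in the canonical bundle formula purely in terms of $b$ and $B_{n-2}$, uniformly over all $X$: one must show that the "moduli part" $L$ on $Y$, a priori only $\mathbb Q$-Cartier, becomes Cartier after multiplication by an explicit $N(b,B_{n-2})$, and that the boundary $\Delta$ has coefficients in $\{1 - \ell/m : m \le N(b,B_{n-2})\}$. This is where the hypothesis on the middle Betti number is essential — it controls the variation of Hodge structure of the family of canonical covers, hence the local monodromy eigenvalues (roots of unity of order dividing something bounded by $B_{n-2}$ and $b$) and the unipotent-part jumps that produce the denominators. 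A secondary, more technical point is that $Y$ may have to be blown up so that the discriminant of $f$ becomes a normal crossing divisor and the semistable reduction is available; one must check this does not destroy the effective surface bounds, which it does not since the relevant invariants (Picard number growth, self-intersection of the big part) can be controlled along the tower of blow-ups needed to resolve a bounded amount of monodromy data. Finally I would record the dimension count $n \le 4 \Rightarrow n - \kappa = n-2 \le 2$, so that $F$ is a curve or surface of Kodaira dimension zero, whence $b \in \{1,2,3,4,6\}$ and $B_{n-2}$ is bounded, giving the claimed universal constant.
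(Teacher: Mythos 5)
Your overall framework is the same as the paper's: apply the Fujino--Mori canonical bundle formula to reduce to a big $\Q$-divisor $K_Y+D_Y+L_Y$ on the two-dimensional base, note that the coefficients of $D_Y$ lie in a $\DCC$ set and $NbL_Y$ is Cartier with $N=N(B_{n-2})$, and then invoke effective adjoint-system results on surfaces of Alexeev--Mori / Langer / Koll\'ar type. This is precisely the skeleton of Set-up \ref{setup2.1}, Lemma \ref{form}, and Lemma \ref{big}. But the proposal misplaces the main difficulty and has a genuine gap at the step you describe as ``absorbing $L$ into the estimate using its bounded denominator.''

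The issue is that $L_Y$ is only \emph{nef}, not big, and $K_Y+D_Y$ need not even be pseudo-effective. The effective birationality results you want to quote (e.g.\ \cite[Th 3.2, 5.4]{La}) require explicit \emph{lower bounds} on the positivity of the adjoint boundary---concretely, lower bounds on $P^2$ and on $P\cdot C_t$ for $P$ the nef part of a Zariski decomposition of $K_Y+D_Y+L_Y$ and $C_t$ a moving curve. The Cartier-index bound $Nb$ on $L_Y$ does \emph{not} yield these by itself: if $K_Y+D_Y$ fails to be pseudo-effective and $L_Y^2=0$, the positive part $P_Y$ can in principle be arbitrarily small. What the paper actually has to do is run the two-dimensional LMMP to reach a model $V$ that is either a klt del Pezzo of Picard number one or a $\P^1$-fibration of Picard number two (Proposition \ref{threshold}), introduce the pseudo-effective threshold $e$ with $K_W+D_W+eL_W$ pseudo-effective, and show that $e$ is bounded away from $1$ by an explicit constant in $b,N$ (Lemmas \ref{e1}, \ref{e2}); only then does \eqref{eqP} give a usable lower bound $P_Y^2\gtrsim (1-e)^2/(Nb)^2$. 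Even with the threshold bound, the case $\kappa(L_Y)\le 1$, $\kappa(K_Y+D_Y)\le 1$ still requires the case-by-case analysis of Claim \ref{3.4} and Lemmas \ref{LK}--\ref{q}, where in the ``worse case'' \eqref{worse} one must extract the positivity not from $L_Y$ or from $K_Y+\sigma'D_W$ but from exceptional components of $D_Y$. None of this appears in your sketch, and without it your final inference from ``$|m_0(K_Y+\Delta+L)|$ birational'' does not follow. By contrast, the obstacle you single out---bounding denominators and coefficients via the Betti number---is exactly the content of \cite[Theorem 3.1, Proposition 2.8]{FM}, which you correctly cite; that part is not where the new work is.
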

We say a few words about the proof.
In Section \ref{aux} we will consider two $\Q$-divisors $D_Y$ and $L_Y$ on $Y$ 
(with $Y$ suitably chosen),
such that the reflexive hull of $f_*\OO_X(b N K_{X/Y})$ is isomorphic to $\OO_Y(b N (D_Y + L_Y))$
for some constant $N$ depending on $B_{n-\kappa}$. The divisor $b N (K_Y+D_Y + L_Y)$ is big, and
in order to prove Theorem \ref{ThA} it remains to bound a multiple $M$ of $bN$ for which $\Phi_{|M K_Y +MD_Y + M L_Y|}$ is birational. 

The two divisors $L_Y$ and $D_Y$ are of different nature. $D_Y$ is given by the multiplicities of
fibres of $f$, and the pair $(X,D_Y)$ is klt, whereas $bL_Y$ is the semistable part of the direct image of $bK_{X/Y}$, hence nef. We will apply the log minimal model program for surfaces in various ways, to reduce the problem to the case where on some birational model $W$ the corresponding divisor $K_W + D_W + L_W$ is nef. The rest then will be easy if the sheaf $K_Y + D_Y$ is big, and will not be too difficult if $L_Y$ is big and
$K_Y + D_Y$ pseudoeffective. For the remaining cases we will consider in Section \ref{prf} 
the {\it pseudo-effective threshold}, i.e. the smallest real number $e$ with $K_W + D_W + e L_W$ pseudo-effective, and we will show that $e$ is bounded away from one. 

Starting from Section \ref{log} our arguments and methods are using $\dim(Y)=\kappa(X)=2$,
sometimes just by laziness, and at other places in an essential way. 
After finishing the proof of Theorem
\ref{ThA} we will be a bit more precise (see Remarks \ref{end} and \ref{rem2.1}).

If one assumes that the general fibre $F$ of the Iitaka fibration has a good minimal model $F'$, hence 
one with $b K_{F'}=0$, then $L_Y$ would be the pullback of a nef and big $\Q$-divisor on some compactification of a moduli scheme. As we will discuss in 
Remark \ref{rem2.1}, assuming the existence of good minimal models, the existence of nice compactifications of moduli schemes might lead to an affirmative answer to Question \ref{quest}. 

Conjecturally the index $b$ and the Betti number $B_{\dim(F)}$ should be bounded by a constant 
depending only on the dimension of $F$. So one could hope for an affirmative answer to:
\begin{question}\label{quest2}
Can one choose the constant $M$ in Question \ref{quest} to be independent of $b$ and $B_{n-\kappa}$?
\end{question}
For example, for $F$ an elliptic curve one has $b=1$ and $B_{1}=2$.
For surfaces $F$ of Kodaira dimension zero, the index $b$ divides $12$, and $22$ is an upper bound for the middle Betti number $B_{2}$ of the smooth minimal model of the canonical covering of $F$. 
Hence for $n\leq 4$ the constant $M$ in Theorem \ref{ThA} can be chosen to be {\it universal}, i.e. 
only depending on $n$. Since by \cite[\S 10]{Mo}, \cite[Corollary 6.2]{FM}, \cite[Th 1.1]{CC}, \cite{HM} and \cite{Ta} the same holds true for $\dim(X)=3$ and $\kappa(X)=0, 1$, or $3$ we can state:
\begin{corollary}\label{CoB}
There is a computable universal constant $M_3$ such that $\Phi_{|M_3 K_X|}$
is an Iitaka fibration for every $3$-dimensional projective manifold $X$.
\end{corollary}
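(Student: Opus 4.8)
The plan is to collect the known effective Iitaka-fibration results in each Kodaira dimension and splice them together, the only genuinely new input being the case $\kappa(X)=2$, which is Theorem \ref{ThA}. First I would dispose of the boundary cases $\kappa(X)=0$ and $\kappa(X)=3$: for $\kappa=3$, $X$ is of general type, so by the work of Hacon--McKernan \cite{HM} and Takayama \cite{Ta} there is an absolute constant $r_3$ with $\Phi_{|mK_X|}$ birational for all $m\ge r_3$; for $\kappa=0$, by \cite[\S 10]{Mo} (or the classification of minimal threefolds of Kodaira dimension zero) there is a universal $m_0$ with $|m_0 K_X|\ne\emptyset$ defining the (trivial) Iitaka fibration. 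Next, for $\kappa(X)=1$ the general fibre $F$ of the Iitaka fibration is a surface of Kodaira dimension zero, so its index $b$ divides $12$ and the middle Betti number $B_2\le 22$ for the smooth minimal model of the canonical covering; feeding these bounds into the Fujino--Mori theorem \cite{FM} (whose constant $M(1,\kappa,b,B_{n-\kappa})$ answers Question \ref{quest} for $\kappa=1$, cf.\ \cite[Corollary 6.2]{FM}) produces a universal constant in this case. The one-dimensional Iitaka fibration statement for threefolds is also recorded in \cite[Th 1.1]{CC}.

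It remains to treat $\kappa(X)=2$. Here the general fibre $F$ of $f:X\to Y$ is a curve of Kodaira dimension zero, i.e.\ an elliptic curve, so $b=1$ and $B_{n-\kappa}=B_1=2$. Since $n=3$ we have $n-\kappa=1$, and Theorem \ref{ThA} yields a computable positive integer $M=M(b,B_{n-2})=M(1,2)$ such that $\Phi_{|MK_X|}$ is birational to $f$. As $b$ and $B_1$ are now fixed absolute numbers, this $M$ is itself an absolute, computable constant, depending on nothing.

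Finally I would set $M_3$ to be the least common multiple of the four constants produced above (one for each value of $\kappa(X)\in\{0,1,2,3\}$); passing to a common multiple is harmless since if $\Phi_{|m K_X|}$ is birational to the Iitaka fibration then so is $\Phi_{|km K_X|}$ for every $k\ge 1$ (the corresponding linear systems only grow, and the Iitaka fibration is birationally determined by any single one of them). Then for an arbitrary smooth projective threefold $X$ one reads off the value of $\kappa(X)$ and applies the relevant case; in every case $\Phi_{|M_3 K_X|}$ is birational to an Iitaka fibration of $X$. The main point — and the only part that is not a citation — is of course Theorem \ref{ThA} specialized to $n=3$; everything else is bookkeeping of already-known effective bounds, with the observation that in dimension $\le 4$ the index and middle Betti number of the fibre are uniformly bounded by surface geometry.
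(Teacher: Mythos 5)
Your proposal is correct and follows essentially the same route as the paper: dispose of $\kappa(X)\in\{0,1,3\}$ by citing \cite[\S 10]{Mo}, \cite[Corollary 6.2]{FM}, and \cite[Th 1.1]{CC} (together with \cite{HM}, \cite{Ta}), and then apply Theorem~\ref{ThA} for $\kappa(X)=2$ with $b=1$, $B_1=2$ (so $N=12$). The only difference is that you make the combining step (taking a common multiple of the four constants, justified by the fact that passing from $m$ to $km$ cannot destroy the Iitaka-fibration property) explicit, whereas the paper leaves it implicit; this is harmless bookkeeping.
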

We remark that when $\dim X = 3$ and $\kappa(X) = 2$, Koll\'ar \cite[(7.7)]{Ko94} 
has already shown that there exists a universal constant $M'$ such that $H^0(X, mK_X) \ne 0$ for all $m \ge M'$, under the additional assumption that the Iitaka fibration is non-isotrivial.
A direct proof of Corollary \ref{CoB}, using the existence of good minimal models, will be
given at the end of Section \ref{com}.
\begin{footnote}
{After a first version of this article was submitted to the arXiv-server, we learned that the Corollary \ref{CoB} has been obtained independently by Adam T. Ringler in \cite{Ri}, using different arguments.}
\end{footnote}

\begin{setup}
{\bf Conventions.}
\end{setup}
We adopt the conventions of Hartshorne's book, of \cite{KMM} and and of \cite{KM}.
However, if $D$ is a $\Q$-divisor on $X$ we will often write $H^0(X,D)$ or
$H^0(X,\OO_X(D))$ instead of $H^0(X,\OO_X(\lfloor D \rfloor))$,
and write $|D|$ instead of $|\lfloor D \rfloor|$.
By abuse of notations we will not distinguish line bundles and 
linear equivalence classes of divisors.
\begin{setup}
{\bf Acknowledgments.}
\end{setup}

We are grateful to Osamu Fujino and to Noboru Nakayama for numerous discussions on the
background of Question \ref{quest}. In particular O. Fujino explained the results
in \cite{FM} in April 2007 in several e-mails to the second named author, and N.~Nakayama brought 
to our attention the references \cite{Ii} and \cite{KU} on effective results for surfaces.
We thank the referee for suggestions on how to improve the presentation of the main result and 
of the methods leading to its proof. 

The article was written during a visit of the second named author to the University Duisburg-Essen. He thanks the members of the Department of Mathematics in Essen for their support and hospitality.

\section{Some auxiliary results}\label{aux}

\begin{set-up}\label{setup2.1}
Let $X$ be a complex $n$-fold of Kodaira dimension $\kappa$. We will consider an Iitaka fibration 
$f:X\to Y$ of $X$ with $Y$ nonsingular, and $F$ will denote a general fibre of $f$. 
Replacing $X$ by some nonsingular blowup, as in \cite[\S 3]{Vi83} or \cite[\S 2, \S 4]{FM}, 
one may assume that $f: X \to Y$ is a morphism, that the discriminant of $f$ is contained 
in a normal crossing divisor of $Y$ and that each effective divisor $E$ in $X$, 
with ${\rm codim}_Y(f(E))\geq 2$, is exceptional for some morphism $X\to X'$ with $X'$ nonsingular. 
In particular, for all $i \ge 1$ and for all such divisors $E$
one has
$$
H^0(X, \, \OO_X(ibK_X)) = H^0(X, \, \OO_X(ibK_X+ E))=
H^0(Y, \, \OO_Y(ibK_Y)\otimes f_*\OO_X(ibK_{X/Y})^{\vee\vee}),$$
where $b$ denotes again the index of $F$ and where $f_*\OO_X(ibK_{X/Y})^{\vee\vee}$ is the
invertible sheaf, obtained as the reflexive hull of $f_*\OO_X(ibK_{X/Y})$.
By \cite[Corollary 2.5]{FM} one can define the semistable part of 
$f_*\OO_X(ibK_{X/Y})$ as a $\Q$-Cartier divisor $i L^{ss}_{X/Y}$, compatible with base change, such that
$\OO_Y(iL^{ss}_{X/Y})\subset f_*\OO_X(ibK_{X/Y})^{\vee\vee}$ for $i$ sufficiently divisible, 
and such that both sheaves coincide if $f:X\to Y$ is semistable in codimension one. 
In particular, $L^{ss}_{X/Y}$
is nef. We will write
$$
L_Y = \frac{1}{b} L^{ss}_{X/Y} \mbox{ \ \ and \ \ } D_Y = \sum_P \frac{s_P}{b} P
$$
for the $\Q$-divisors with $\OO_Y(ib(L_Y+D_Y))=f_*\OO_X(ibK_{X/Y})^{\vee\vee}$.
We remark that $D_Y$ is supported on the discriminant locus of $f$ and
$b(L_Y + D_Y)$ is only a $\Q$-divisor (whose denominators may not be uniformly bounded); 
see \cite[Proposition 2.2]{FM}.

Let $B_{n-\kappa}$ be the middle Betti number of the canonical covering of $F$, and
$$N = N(B_{n-\kappa}) = \lcm\{m \in \Z_{>0} \, | \, \varphi(m) \le B_{n-\kappa}\},$$
where $\varphi$ denotes the Euler $\varphi$-function. 
By \cite[Theorem 3.1]{FM}, $NbL_Y=NL^{ss}_{X/Y}$ is 
an integral Cartier divisor. By \cite[Proposition 2.8]{FM},
if $s_P\neq 0$, there exist $u_P, v_P \in \Z_{>0}$ with $0 < v_P \le bN$ such that  
$$0 < \frac{s_P}{b} = \frac{b N u_P - v_P}{bN u_P} < 1.$$

So all the non-zero coefficients of $D_Y$ are contained in
$$
\A(b, N) := \{\frac{bN u- v}{bN u} \, | \, u, v \in \Z_{>0}; \, 0 < v \le bN\}\setminus \{0\}.
$$
\end{set-up}

\begin{lemma}\label{form} In the Set-up \rm{\ref{setup2.1}}, the following hold true.
\begin{itemize}
\item[(1)] 
The set $\A(b, N)$ is a $\DCC$ set in the sense of \cite[\S 2]{AM},
and one has 
$$\frac{1}{N b} \le {\rm Inf} \, \A(b,N).$$
\item[(2)]
$(Y, D_Y)$ is $\klt$.
\item[(3)]
The $\Q$-divisor $K_Y + D_Y + L_Y$ is big.
\item[(4)]
For every $m \in \Z_{> 0}$, 
we have $H^0(X, mbK_X) \cong H^0(Y, mb(K_Y + D_Y + L_Y))$; further
the map $\Phi_{|mbK_X|}$ is birational to the Iitaka fibration $f$
if and only if $|mb(K_Y + D_Y + L_Y)|$ gives rise
to a birational map.
\item[(5)] $NbL_Y$ is an integral Cartier divisor.
\item[(6)]
If $s+1 \in \Z_{> 0}$ is divisible by $Nb$, then $(s+1)D_Y \ge \lceil sD_Y \rceil$.
\end{itemize}
\end{lemma}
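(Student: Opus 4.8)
The plan is to split the six assertions into an easy group and a harder group. Parts (2)--(5) should follow quickly from Set-up \ref{setup2.1} together with the Fujino--Mori results recalled there, so the real work is the elementary number theory behind (1) and (6). For (1) I would write a nonzero element of $\A(b,N)$ as $1-\frac{v}{bNu}$ with $u,v\in\Z_{>0}$ and $1\le v\le bN$. For any $\epsilon>0$, the inequality $1-\frac{v}{bNu}\le 1-\epsilon$ forces $bNu\le v/\epsilon\le bN/\epsilon$, hence $u\le 1/\epsilon$; together with $1\le v\le bN$ this leaves only finitely many pairs $(u,v)$, so $\A(b,N)\cap(0,1-\epsilon]$ is finite for every $\epsilon>0$. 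This immediately forbids an infinite strictly decreasing sequence in $\A(b,N)$ (take $\epsilon=1$ minus its first term), which is the $\DCC$ property in the sense of \cite[\S 2]{AM}. For the infimum I would note that $\frac1{bN}$ is attained at $(u,v)=(1,bN-1)$, that for $u=1$ every nonzero element $\frac{bN-v}{bN}$ has $v\le bN-1$ hence is $\ge\frac1{bN}$, and that for $u\ge 2$ one has $1-\frac{v}{bNu}\ge 1-\frac1u\ge\frac12\ge\frac1{bN}$ (note $2\mid N$, hence $bN\ge2$). Thus ${\rm Inf}\,\A(b,N)=\frac1{bN}$, slightly more than is asked.

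The remaining easy parts: for (2), $Y$ is nonsingular, $\Supp D_Y$ lies in the normal crossing divisor fixed in Set-up \ref{setup2.1}, and by (1) all coefficients of $D_Y$ lie in $[0,1)$, so $(Y,D_Y)$ is $\klt$ by the standard discrepancy computation for simple normal crossing pairs (\cite{KM}). For (4), I would combine $K_X=K_{X/Y}+f^*K_Y$, the displayed formula of Set-up \ref{setup2.1}, and the identity $\OO_Y(ib(L_Y+D_Y))=f_*\OO_X(ibK_{X/Y})^{\vee\vee}$ to get $H^0(X,mbK_X)\cong H^0(Y,mb(K_Y+D_Y+L_Y))$ for all $m\in\Z_{>0}$; since under this identification every section of $mbK_X$ is the $f$-pullback of a section of $mb(K_Y+D_Y+L_Y)$, the map $\Phi_{|mbK_X|}$ factors as $\Phi_{|mb(K_Y+D_Y+L_Y)|}\circ f$, and as $f$ is a fibration with connected general fibre, $\Phi_{|mbK_X|}$ is birational to $f$ exactly when $\Phi_{|mb(K_Y+D_Y+L_Y)|}$ is birational. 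For (3), part (4) gives that $h^0(Y,mb(K_Y+D_Y+L_Y))=h^0(X,mbK_X)$ grows like $m^{\kappa(X)}=m^{\dim Y}$ along sufficiently divisible $m$, so $K_Y+D_Y+L_Y$ is big. For (5), this is precisely \cite[Theorem 3.1]{FM} for $N=\lcm\{m\in\Z_{>0}\mid\varphi(m)\le B_{n-\kappa}\}$, as already noted in Set-up \ref{setup2.1}.

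The genuinely computational part is (6), and that is where I expect the only real obstacle. It suffices to fix a prime divisor $P$ in $D_Y$ with coefficient $d_P=\frac{bNu-v}{bNu}$ ($u,v\in\Z_{>0}$, $1\le v\le bN$, so $0<d_P<1$) and prove $(s+1)d_P\ge\lceil sd_P\rceil$. Write $s+1=bNt$ with $t\in\Z_{>0}$; clearing $bN$ from numerator and denominator gives $(s+1)d_P=tbN-\frac{tv}{u}$, so if $tv=qu+w$ with $0\le w<u$, then $(s+1)d_P$ is an integer when $w=0$ and has fractional part $\frac{u-w}{u}$ when $w\ge1$. In both cases the fractional part of $(s+1)d_P$ is $\le d_P$: trivially for $w=0$, and for $w\ge1$ the inequality $\frac{u-w}{u}\le 1-\frac{v}{bNu}$ is equivalent to $bNw\ge v$, which holds because $w\ge1$ and $v\le bN$. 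Now I would invoke the elementary fact that for a real number $x$ and $0<d<1$ one has $\lceil x-d\rceil\le x$ if and only if the fractional part of $x$ is at most $d$; applying it with $x=(s+1)d_P$ and $d=d_P$, and using $\lceil sd_P\rceil=\lceil(s+1)d_P-d_P\rceil$, we obtain $\lceil sd_P\rceil\le(s+1)d_P$.

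The crucial leverage in (6) --- exactly as in (1) --- is the Fujino--Mori bound $v_P\le bN$ on the denominators of $D_Y$ from \cite[Proposition 2.8]{FM}: without it (6) genuinely fails, e.g.\ for a hypothetical coefficient of the shape $\frac{2u-3}{2u}$ when $bN=2$. So the main obstacle is not conceptual but lies in recognising that (6) is equivalent to the fractional-part bound and then exploiting $v_P\le bN$; everything else is bookkeeping on top of the machinery of \cite{FM}.
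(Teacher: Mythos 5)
Your proof is correct and follows essentially the same route as the paper, which disposes of (1)--(5) as immediate consequences of Set-up \ref{setup2.1} and the cited results of Fujino--Mori, and proves (6) from the bound $v_P\le bN$ together with the fact that $(s+1)\beta$ has denominator dividing $u$. You fill in the details the paper dismisses as ``obvious'' (in particular your fractional-part reformulation of (6) is equivalent to the paper's observation that $\beta\ge 1-\tfrac1u$ and $(s+1)\beta\in\tfrac1u\Z$), and you add the worthwhile remark that $2\mid N$ forces $bN\ge 2$, without which the infimum bound in (1) would actually fail.
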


\begin{proof}
Part (1) is obvious and (5) was mentioned already in Set-up \ref{setup2.1}.
For (2), we remark that $D_Y$, as part of the discriminant locus, is a simple normal crossing divisor
and that $s_P/b \in (0, 1)$. The parts (3) and (4) are obvious, since for all $i \ge 1$
$$H^0(X, \, ibK_X) = H^0(Y, \, ib(K_Y + L_Y + D_Y)).$$
Finally (6) is a consequence of the description of the coefficients of $D_Y$
as elements of $\A(b,N)$. In fact, since all $\beta=\frac{bNu-v}{bNu}\in\A(b,N)$ are larger than
or equal to $1-\frac{1}{u}$ and since $(s+1)\beta \in \frac{1}{u}\cdot\Z$, 
one finds that $(s+1)\beta \ge \lceil s \beta \rceil$.
\end{proof}
\section{Log minimal models of surfaces and pseudo-effectivity}\label{log}
From now on we will restrict ourselves to the case $\kappa=2$.

\begin{remark}\label{rem2.3} 
As we will see in proving Theorem \ref{ThA}, the constant $M(b, B_{n-2})$ 
(later written as $M(b, N)$) can be computed using the invariants
$\beta(\A)$ and $\epsilon(\A)$ of the $\DCC$ set $\A=\A(b,N)$ 
(see \cite[Th 4.12]{AM} and \cite[Complement 5.7.4]{Ko94}, or \cite[Th 5.4]{La}).
\end{remark}
\begin{lemma}\label{canmod}
There is a birational morphism $\sigma : Y \to W$ such that the following hold true.
\begin{itemize}
\item[(1)]
$K_W + D_W + L_W$ is ample and
$K_Y + D_Y + L_Y = \sigma^*(K_W + D_W + L_W) + E_{\sigma}.$
Here $D_W : = \sigma_*D_Y$, $L_W : = \sigma_*L_Y$;
$E_{\sigma} \ge 0$ is an effective $\sigma$-exceptional
divisor. 
\item[(2)]
$(W, D_W)$ is $\klt$.
\item[(3)]
Suppose that $L_S$ is big for $S = Y$ or for $S = W$. Then $L_S \sim_{\Q} L_S'$
with $(S, D_S + L_S')$ $\klt$.
\end{itemize}
\end{lemma}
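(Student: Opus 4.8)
The plan is to run a minimal model program on the $\klt$ surface pair $(Y,D_Y)$ directed by $K_Y+D_Y+L_Y$, exploiting that $L_Y$ is nef, and then to pass to the associated ample model.

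\emph{Parts (1) and (2).} A $(K_Y+D_Y+L_Y)$-negative extremal contraction contracts a curve $C$ with $(K_Y+D_Y+L_Y)\cdot C<0$, and since $L_Y$ is nef (Set-up \ref{setup2.1}) this forces $(K_Y+D_Y)\cdot C<0$; so such a contraction is simultaneously a step of the $(K_Y+D_Y)$-MMP. As $(Y,D_Y)$ is $\klt$ (Lemma \ref{form}(2)) and $\dim Y=2$, this program terminates and keeps the boundary $\klt$. It cannot end with a Mori fibre space: $K_Y+D_Y+L_Y$ is big (Lemma \ref{form}(3)), hence $\Q$-linearly equivalent to (ample)$+$(effective), so it is positive on the general member of a covering family of curves and is not anti-ample on a surface of Picard number one. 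Thus one obtains a birational morphism $\sigma':Y\to W'$ with $D_{W'}:=\sigma'_*D_Y$, $L_{W'}:=\sigma'_*L_Y$, with $(W',D_{W'})$ $\klt$, with $K_{W'}+D_{W'}+L_{W'}$ nef, and with $L_{W'}$ still nef: if $\sigma_i:Y_{i-1}\to Y_i$ is one of the contractions with exceptional curve $C_i$ (so $C_i^2<0$), then $\sigma_i^*L_{Y_i}=L_{Y_{i-1}}+a_iC_i$ with $a_i=-(L_{Y_{i-1}}\cdot C_i)/C_i^2\ge0$, and intersecting with strict transforms of curves on $Y_i$ shows $L_{Y_i}$ is nef.

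Now $K_{W'}+D_{W'}+L_{W'}$ is nef and big, and $2(K_{W'}+D_{W'}+L_{W'})-(K_{W'}+D_{W'})=(K_{W'}+D_{W'}+L_{W'})+L_{W'}$ is nef and big, so the base-point-free theorem for the $\klt$ pair $(W',D_{W'})$ shows $K_{W'}+D_{W'}+L_{W'}$ is semi-ample; being big, it defines a birational morphism $\rho:W'\to W$ contracting exactly the curves on which it is numerically trivial. Put $D_W:=\rho_*D_{W'}$, $L_W:=\rho_*L_{W'}$ and $\sigma:=\rho\circ\sigma'$. Then $K_W+D_W+L_W$ is ample, $L_W$ is nef (same computation), and $(W,D_W)$ is $\klt$: the curves $C$ contracted by $\rho$ satisfy $(K_{W'}+D_{W'})\cdot C\le0$ since $L_{W'}\cdot C\ge0$, so $K_{W'}+D_{W'}\ge\rho^*(K_W+D_W)$ by the negativity lemma and the discrepancies of $(W',D_{W'})$ over $W$ can only increase. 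Finally, $\sigma_i^*(K_{Y_i}+D_{Y_i}+L_{Y_i})\cdot C_i=0$ gives $K_{Y_{i-1}}+D_{Y_{i-1}}+L_{Y_{i-1}}=\sigma_i^*(K_{Y_i}+D_{Y_i}+L_{Y_i})+b_iC_i$ with $b_i=(K_{Y_{i-1}}+D_{Y_{i-1}}+L_{Y_{i-1}})\cdot C_i/C_i^2>0$; pulling these relations back to $Y$ and summing (the crepant step $\rho$ contributing $0$) yields $K_Y+D_Y+L_Y=\sigma^*(K_W+D_W+L_W)+E_\sigma$ with $E_\sigma\ge0$ effective and $\sigma$-exceptional. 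This proves (1) and (2).

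\emph{Part (3).} Let $S\in\{Y,W\}$ with $L_S$ big; note $L_S$ is also nef by the above (resp. by Set-up \ref{setup2.1} when $S=Y$). Write $L_S\sim_\Q A+E$ with $A$ an ample $\Q$-divisor and $E\ge0$, and choose $0<t\ll1$ with $(S,D_S+tE)$ $\klt$ — possible because the log canonical threshold of $E$ with respect to the $\klt$ pair $(S,D_S)$ is positive. Then $(1-t)L_S+tA$ is ample, so some multiple $m((1-t)L_S+tA)$ is very ample; a general member $H$ of its linear system avoids the finite set $\Sing(S)$ and is transverse to $D_S+tE$, so $(S,D_S+tE+\tfrac1mH)$ is $\klt$ by Bertini. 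Taking $L_S':=tE+\tfrac1mH\sim_\Q(1-t)L_S+t(A+E)\sim_\Q L_S$ finishes (3).

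\emph{Expected main difficulty.} The argument stays entirely within surface MMP, and no single step is deep; the points that need attention are the bookkeeping that $L_{W'}$ and $L_W$ remain nef (the explicit negativity computation above) and the upgrade from a nef minimal model to the \emph{ample} model $W$ while keeping $(W,D_W)$ $\klt$, with the bigness of $K_Y+D_Y+L_Y$ being exactly what rules out fibre-type contractions and makes $\sigma$ birational.
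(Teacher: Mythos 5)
Your proof takes a genuinely different route from the paper's. The paper, following the idea of \cite[Th 5.2]{FM}, first uses the nefness of $L_Y$ and the bigness of $K_Y+D_Y+L_Y$ to produce a single $\klt$ pair $(Y,\Delta_Y)$ with $K_Y+D_Y+L_Y\sim_{\Q}a(K_Y+\Delta_Y)$, then runs the LMMP of Construction~\ref{LMMP} on $(Y,\Delta_Y)$ and lets $W$ be its log canonical model. You instead run a $(K_Y+D_Y)$-MMP ``with scaling by $L_Y$'' directly on $(Y,D_Y)$ and pass to the ample model of $K_{W'}+D_{W'}+L_{W'}$ via the base-point-free theorem. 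Both routes produce the same morphism $\sigma:Y\to W$, and your bookkeeping for (1), for nefness of $L_{W'}$ and $L_W$, and for part (3) is correct.

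There is, however, a genuine gap in your argument for part (2). You apply the negativity lemma to deduce $K_{W'}+D_{W'}\ge\rho^*(K_W+D_W)$ and then compare discrepancies, but both steps presuppose that $K_W+D_W$ is $\Q$-Cartier --- indeed the very definition of ``$(W,D_W)$ is $\klt$'' requires this. Your construction only gives you that $K_W+D_W+L_W$ is $\Q$-Cartier (being the ample divisor produced by base-point-freeness); it gives no control over $K_W+D_W$ or $L_W$ separately, because $W$ is merely the ample model of a big and nef divisor on the $\Q$-factorial surface $W'$, and such a contraction need not produce a $\Q$-factorial (or even $\Q$-Gorenstein) surface. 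The paper sidesteps exactly this issue: since $W$ arises there as the log canonical model of the $\klt$ pair $(Y,\Delta_Y)$, the pair $(W,\Delta_W)$ is $\klt$ by the standard theory, so $W$ has quotient singularities and is $\Q$-factorial; only \emph{then} does the discrepancy comparison for $(W,D_W)$ make sense. To repair your proof you would need either to invoke the same FM-type perturbation (realizing $W$ as the log canonical model of an auxiliary $\klt$ pair on $W'$, e.g.\ obtained by absorbing a suitable $\Q$-linear representative of $L_{W'}$ into the boundary), or to give a separate argument that $W$ is $\Q$-factorial; the negativity-lemma step alone does not suffice.
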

Before giving the proof, let us recall the log minimal model program (LMMP) for surfaces.
\begin{construction}\label{LMMP} Starting with any two dimensional $\klt$-pair $(Y, \Delta_Y)$, 
the LMMP provides us with a sequence $\gamma : Y \to Z$ of contractions of ($K_Y + \Delta_Y$)-negative extremal rays. If $K_Y + \Delta_Y$ is big or more generally pseudo-effective
(which will be assumed here),
$\gamma$ will be birational. Writing $\Delta_Z=\gamma_*\Delta_Y$, the $\Q$-divisor
$E := K_Y+\Delta_Y-\gamma^* (K_Z+\Delta_Z)$ is effective and $\gamma$-exceptional,
the $\Q$-divisor $K_Z+\Delta_Z$ is nef and $(Z,\Delta_Z)$ is $\klt$.

By the abundance theorem for $\klt$ log surfaces (see \cite{Ko+}, for example), 
there is a morphism with connected fibres $\psi:Z \to W$ such that $K_Z+\Delta_Z$ 
is the pullback of an ample $\Q$-divisor $H$ on $W$. 
\end{construction}
\begin{proof}[Proof of Lemma \ref{canmod}]
As in the proof of \cite[Th 5.2]{FM}, the nefness of $L_Y$ and the bigness
of $K_Y + D_Y + L_Y$ allows us to find some $a \in \Q_{>0}$
and a $\klt$-pair $(Y, \Delta_Y)$ such that
$$K_Y + L_Y + D_Y \sim_{\Q} a(K_Y + \Delta_Y).$$

If $K_Y + \Delta_Y$ is big, as we assumed in Lemma \ref{canmod}, the morphism
$\sigma=\psi\circ\gamma:Y\to W$ in Construction \ref{LMMP} is birational and the ample $\Q$-divisor on $W$ is $H=K_W+\Delta_W$. So $K_Z+\Delta_Z=\psi^*(K_W+\Delta_W)$ and hence the $\gamma$-exceptional divisor $E$ 
is 
$$ K_Y + \Delta_Y - \sigma^*(K_W+\Delta_W).$$
In particular, $(W,\Delta_W)$ is again $\klt$. We find
\begin{multline*}
K_Y + L_Y + D_Y \sim_{\Q} a\sigma^*(K_W + \Delta_W) + a E =\\
\sigma^* \sigma_*(K_Y + L_Y + D_Y)  + a E =
\sigma^*(K_W + L_W + D_W) + a E.
\end{multline*}

\par
(1) is true by Lemma \ref{form} (3). Part (2) follows from (1) and Lemma \ref{form} (2).
\par
For part (3), we refere to \cite[Proposition 2.61, Corollary 3.5]{KM}.
\end{proof}
\begin{rem-ass}\label{wlg}
The morphism $\sigma : Y \to W$, constructed in Lemma \ref{canmod}, factors
through a minimal resolution $\widetilde{W}$ of $W$. So $\sigma=\pi \circ \eta$ for
$$
\begin{CD}
Y @>\xi>>  \widetilde{W} @>\pi >> W.
\end{CD}
$$  
Applying the construction in Section \ref{aux} to $\widetilde{W}$ instead of $Y$
one obtains divisors $L_{\widetilde W}$ and $D_{\widetilde W}$, which are just 
the direct images of the divisors $L_Y$ and $D_Y$. 

Using the calculation in Lemma \ref{canmod}, we see that Lemma \ref{form}
holds true even with $Y$ replaced by the minimal resolution of $W$.
By abuse of notations we will replace $Y$ by $\widetilde{W}$ and assume in the sequel that
the morphism $\sigma:Y \to W$ in Lemma \ref{canmod} is a minimal desingularization.

\end{rem-ass}

The answer to Question \ref{quest} is quite easy when $K_Y + D_Y$ is big, and
especially when $L_Y \equiv 0$.

\begin{lemma}\label{big}
Suppose that $K_Y + D_Y$ is big. Then there is a constant $M=M(b,N)$ such that for all
$s \ge M(b, N)$  with $s+1$ divisible by $Nb$, the linear system
$|(s+1) (K_Y + D_Y + L_Y)|$ defines a birational map. In particular, $\Phi_{|MK_X|}$
is an Iitaka fibration for some $M = M(b, N)$ depending only on the set
$\A(b, N)$, and hence only on $b$ and $N = N(B_{n-2})$.
\end{lemma}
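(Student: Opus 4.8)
The plan is to use Lemma \ref{form}(4) to reduce everything to producing a computable $M=M(b,N)$, divisible by $Nb$, for which $|M(K_Y+D_Y+L_Y)|$ separates two general points of the surface $Y$. Write $s+1=M$. Since $(s+1)K_Y$ and $(s+1)L_Y$ are integral (the latter by Lemma \ref{form}(5)), a prime-by-prime computation gives
$$
\lfloor (s+1)(K_Y+D_Y+L_Y)\rfloor \;=\; K_Y + \Delta_s + P_s ,
$$
with $\Delta_s:=\lfloor (s+1)D_Y\rfloor - sD_Y$ and $P_s:=s(K_Y+D_Y+L_Y)+L_Y$. By Lemma \ref{form}(6) the coefficients of $\Delta_s$ lie in $[0,1)$; as $\Delta_s$ is supported on the simple normal crossing divisor $D_Y$, the pair $(Y,\Delta_s)$ is $\klt$, while $P_s$ is big by Lemma \ref{form}(3). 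On the smooth surface $Y$ I would then take the Zariski decomposition $P_s=P_s^{+}+P_s^{-}$, so that $P_s^{+}$ is nef and big, $P_s^{-}\ge 0$, and $\vol(P_s^{+})=\vol(P_s)$.

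The one genuinely effective ingredient is a positive lower bound for this volume. The pair $(Y,D_Y)$ is a $\klt$ surface pair with $K_Y+D_Y$ big and with the coefficients of $D_Y$ in the $\DCC$ set $\A=\A(b,N)$; by the boundedness of such pairs (Alexeev, Alexeev--Mori; see \cite[Th 4.12]{AM}, \cite[Complement 5.7.4]{Ko94}, \cite[Th 5.4]{La}) there is a computable $\epsilon(\A)>0$ with $\vol(K_Y+D_Y)\ge\epsilon(\A)$. Since $L_Y$ is nef this already gives $\vol(K_Y+D_Y+L_Y)\ge\epsilon(\A)$, hence $\vol(P_s^{+})=\vol(P_s)\ge s^{2}\,\epsilon(\A)$.

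With the volume bounded below, the remainder is the by now standard effective birationality argument on a surface. Fix two general points $y_1,y_2\in Y$. Once $s^{2}\epsilon(\A)$ exceeds the universal constant needed to impose two points of multiplicity two on a surface, one finds $\Q$-divisors $B_i\sim_{\Q}c\,P_s^{+}$ with $c<1$ for which $(Y,\Delta_s+B_i)$ is non-$\klt$ at $y_i$; cutting these down and tie-breaking against the curves through $y_i$ of small $P_s^{+}$-degree — here the $\DCC$ property of $\A$ enters once more, through a Koll\'ar-type bound $\beta(\A)$ on those curves — one arranges the minimal non-$\klt$ centre at $y_i$ to be the point $y_i$ itself, keeping the pair log canonical there. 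Then Nadel vanishing applied to the multiplier ideal of $(Y,\Delta_s+B_i+P_s^{-})$ — note that
$$
\lfloor (s+1)(K_Y+D_Y+L_Y)\rfloor - K_Y - (\Delta_s+B_i+P_s^{-}) \;\sim_{\Q}\; (1-c)P_s^{+}
$$
is nef and big, and that adding $P_s^{-}$ changes nothing near the general point $y_i$ — yields sections of $\lfloor (s+1)(K_Y+D_Y+L_Y)\rfloor$ separating $y_1$ from $y_2$. So $|\lfloor (s+1)(K_Y+D_Y+L_Y)\rfloor|$ is birational for every $s+1$ divisible by $Nb$ with $s\ge M(b,N)$, where $M(b,N)$ is computable and expressible in terms of $\epsilon(\A)$ and $\beta(\A)$ (cf. Remark \ref{rem2.3}); by Lemma \ref{form}(4), $\Phi_{|(s+1)K_X|}$ is then birational to $f$, and $M:=s+1$ does the job.

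I expect the main obstacle to be exactly the effective lower bound $\vol(K_Y+D_Y)\ge\epsilon(\A)$: it is the only step that is not formal, it is where $b$ and $B_{n-2}$ genuinely enter (through the shape of $\A(b,N)$), and it is what forces the stated dependence of $M$. A secondary but still essential point is making the tie-breaking against the bad curves effective; here the $\DCC$-ness of $\A$ and the Koll\'ar-type estimate are used substantively rather than merely for bookkeeping.
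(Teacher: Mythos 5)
Your proposal is correct and follows essentially the same path as the paper's proof: both reduce to an effective birationality statement for an adjoint linear system on the smooth surface $Y$ governed by the big nef part of a Zariski decomposition, and both draw on exactly the same two $\DCC$-based effective inputs — a volume lower bound $\epsilon(\A)$ (Alexeev--Mori) and a lower bound on the degree of very general curves $\beta(\A)$ (Koll\'ar/Langer) — together with Lemma~\ref{form}~(5),(6) to control the rounding when $Nb\mid(s+1)$. The only real difference is in packaging: the paper passes to the log minimal model $\eta:Y\to Z$ of $(Y,D_Y)$, identifies the positive part with $\eta^*(K_Z+D_Z)$, and cites Langer's Theorems 3.2 and 5.3 as black boxes for the entire effective separation-of-points step, whereas you take the Zariski decomposition of $P_s$ directly and sketch the Angehrn--Siu/Nadel tie-breaking argument that those theorems encapsulate. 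If you retain the sketch, the tie-breaking paragraph should be made precise — in particular the lower bound on $P_s^{+}\!.\,C$ for curves through a very general point is the substantive step hidden behind your invocation of $\beta(\A)$, and it is exactly what \cite[Th 3.2, Th 5.3]{La} supplies.
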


\begin{proof}
Applying Construction \ref{LMMP} to $(Y, D_Y)$,
we get a birational morphism $\eta : Y \to Z$ such that $K_Z + D_Z = \eta_*(K_Y + D_Y)$ is ample
and $E_{\eta} := K_Y + D_Y -\eta^*(K_Z + D_Z)$ is an $\eta$-exceptional effective $\Q$-divisor.

Note that the coefficients of $D_Z = \eta_*D_Y$ still belong to
the same $\DCC$ set $\A(b, N)$. The Remark (3) on page  60 of \cite{La} allows to apply
\cite[Th 3.2]{La}. As in \cite[Th 5.3]{La} one finds a constant $M(b, N)$, depending only on
the set $\A(b, N)$, such that the linear system 
$$
|K_Y + \lceil s \eta^*(K_Z + D_Z)\rceil |
$$
gives rise to a birational map for every $s \ge M(b, N)$. 
The same \cite[Th 3.2]{La} applies to
$$
|K_Y + \lceil s(K_Y + D_Y) + (s+1) L_Y \rceil |,
$$ 
since $(s+1) L_Y$ is pseudo-effective
and hence the boundary divisor of the above adjoint linear system has nef part
larger than $s \eta^*(K_Z + D_Z)$.

Assume further that $Nb$ divides $s+1$. Then by Lemma \ref{form}
%
%
$$
K_Y + \lceil sK_Y + sD_Y + (s+1)L_Y \rceil \le (s+1) (K_Y + D_Y + L_Y).
$$ 
This implies the first part of Lemma \ref{big}. Now the second part follows from the first part 
using Lemma \ref{form} (4).
\end{proof}

\begin{lemma}\label{smalle}
Suppose that $L_Y$ is big and $K_Y + D_Y + eL_Y$ is pseudo-effective
for some $e \in [0, 1)$.
Then there is a constant $M = M(b, N, e)$ such that
$\Phi_{|MK_X|}$ is an Iitaka fibration.
\end{lemma}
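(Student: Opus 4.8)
The plan is to run the argument of Lemma~\ref{big} once more, but now the nef and big divisor that in Lemma~\ref{big} is $\eta^{*}(K_{Z}+D_{Z})$ will be produced from $L_{Y}$, and the lower bound on its self-intersection --- which in Lemma~\ref{big} comes from the ampleness of $K_{Z}+D_{Z}$ and the $\DCC$ boundedness of \cite{AM} --- will instead come from the bigness of $L_{Y}$ together with the gap $1-e$. First I would make two harmless reductions. Since $L_{Y}$ is big, for every $e'\in(e,1)$ the $\Q$-divisor $K_{Y}+D_{Y}+e'L_{Y}=(K_{Y}+D_{Y}+eL_{Y})+(e'-e)L_{Y}$ is again pseudo-effective, in fact big; so after replacing $e$ by such an $e'$ --- which affects the final constant only through the quantity $1/(1-e)$ --- we may assume $e\in\Q$ and $K_{Y}+D_{Y}+eL_{Y}$ big. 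By Lemma~\ref{canmod}(3) we may further write $L_{Y}\sim_{\Q}L_{Y}'$ with $(Y,D_{Y}+L_{Y}')$ $\klt$; then $(Y,D_{Y}+eL_{Y}')$ is $\klt$ as well, and $K_{Y}+D_{Y}+eL_{Y}'\sim_{\Q}K_{Y}+D_{Y}+eL_{Y}$ is big. Applying Construction~\ref{LMMP} to $(Y,D_{Y}+eL_{Y}')$ I get a birational morphism $\eta:Y\to W'$, an ample $\Q$-divisor $H'$ on $W'$ and an effective $\eta$-exceptional $E_{\eta}\ge0$ with $K_{Y}+D_{Y}+eL_{Y}'=\eta^{*}H'+E_{\eta}$. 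Put $P:=\eta^{*}H'+(1-e)L_{Y}$, a nef $\Q$-divisor with $K_{Y}+D_{Y}+L_{Y}\sim_{\Q}P+E_{\eta}$, so that the positive part in the Zariski decomposition of $K_{Y}+D_{Y}+L_{Y}$ dominates $P$. As $L_{Y}$ is big, $NbL_{Y}$ is an integral divisor (Lemma~\ref{form}(5)), hence $(NbL_{Y})^{2}\ge1$, and since $\eta^{*}H'$ and $L_{Y}$ are both nef one obtains
$$P^{2}\ \ge\ (1-e)^{2}L_{Y}^{2}\ \ge\ \frac{(1-e)^{2}}{(Nb)^{2}}\ >\ 0,\qquad \vol\bigl(K_{Y}+D_{Y}+L_{Y}\bigr)\ \ge\ P^{2}.$$

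Next I would feed this into the effective-birationality machinery for $\klt$ log surfaces used in Lemma~\ref{big}, applied to the pair $(Y,D_{Y})$ --- which is $\klt$ with boundary coefficients in the $\DCC$ set $\A(b,N)$ --- together with the nef $\Q$-divisor $s\,\eta^{*}H'+(s(1-e)+1)L_{Y}$, whose self-intersection is at least $(s(1-e)+1)^{2}/(Nb)^{2}$ and thus plays the role of $s^{2}(K_{Z}+D_{Z})^{2}$ in Lemma~\ref{big}. By \cite[Th 3.2]{La}, exactly as in \cite[Th 5.3]{La} and in the proof of Lemma~\ref{big}, this yields an integer $s_{0}=s_{0}(b,N,e)$, depending on $\A(b,N)$ and on $e$ only through the factor $1/(1-e)$, such that for all $s\ge s_{0}$ with $s+1$ divisible by $Nb$ and $se\in\Z$ the adjoint linear system $|K_{Y}+\lceil s(K_{Y}+D_{Y}+eL_{Y}')+(s(1-e)+1)L_{Y}\rceil|$ defines a birational map. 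As in the last paragraph of the proof of Lemma~\ref{big}, using Lemma~\ref{form}(5),(6) and $L_{Y}\sim_{\Q}L_{Y}'$, one checks the inequality of integral divisors $K_{Y}+\lceil\,\cdots\,\rceil\le(s+1)(K_{Y}+D_{Y}+L_{Y})$, so $|(s+1)(K_{Y}+D_{Y}+L_{Y})|$ is birational. By Lemma~\ref{form}(4) the map $\Phi_{|(s+1)bK_{X}|}$ is then birational to the Iitaka fibration $f$, and taking $s$ to be the least admissible integer $\ge s_{0}(b,N,e)$ and $M=(s+1)b$ finishes the proof.

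The main obstacle is entirely the quantitative step. One must verify that the surface effective-birationality results apply uniformly when the positive part is the composite divisor $s\,\eta^{*}H'+(s(1-e)+1)L_{Y}$ --- where $\eta^{*}H'$ may have vanishing self-intersection, for instance when $W'$ is a curve, so that only $L_{Y}$, through $(NbL_{Y})^{2}\ge1$, and the gap $1-e$ control its size --- and, crucially, that the resulting threshold $s_{0}(b,N,e)$ depends on $e$ merely like $1/(1-e)$; it is this scaling that will let the study of the pseudo-effective threshold in Section~\ref{prf} conclude once that threshold is shown to be bounded away from $1$. The remaining bookkeeping --- the possibly unbounded denominators of $D_{Y}$, the rounding operators, and the passage between $L_{Y}$ and its $\klt$-representative $L_{Y}'$ in the displayed divisor inequality --- is delicate but is handled exactly as in Lemmas~\ref{form} and~\ref{big}.
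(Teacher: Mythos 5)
Your argument is correct in its essentials and lands on the same key estimate as the paper's proof, namely that the Zariski-positive part $P_Y$ of $K_Y+D_Y+L_Y$ satisfies $P_Y^2\ge(1-e)^2L_Y^2\ge(1-e)^2/(Nb)^2$, after which both proofs invoke \cite[Th 3.2]{La} and the divisibility/rounding bookkeeping of Lemma~\ref{form}. The route, however, is genuinely different and appreciably longer. The paper simply writes the two Zariski decompositions $K_Y+D_Y+eL_Y=P_e+N_e$ and $K_Y+D_Y+L_Y=P_Y+N_Y$ and uses the maximality of the positive part to get $P_Y\ge(1-e)L_Y+P_e$; no rationality of $e$, no klt representative $L_Y'$ of $L_Y$, and no log MMP run are needed. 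You instead make $e$ rational, replace $L_Y$ by $L_Y'$ via Lemma~\ref{canmod}(3) so that $(Y,D_Y+eL_Y')$ is $\klt$, and then run Construction~\ref{LMMP} on that pair to produce the nef divisor $\eta^*H'+(1-e)L_Y$. That works, but it costs you: the denominators of $L_Y'$ are not a priori controlled by $Nb$ (Lemma~\ref{canmod}(3) gives klt-ness, not integrality of $NbL_Y'$), so the condition ``$se\in\Z$'' does not by itself make $seL_Y'$ an integral divisor, and more importantly the final step is stated incorrectly. The adjoint divisor $K_Y+\lceil s(K_Y+D_Y+eL_Y')+(s(1-e)+1)L_Y\rceil$ is \emph{not} componentwise $\le(s+1)(K_Y+D_Y+L_Y)$: unwinding the ceiling yields a term $seL_Y-seL_Y'$, which is only $\Q$-linearly equivalent to zero, not effective. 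The conclusion can still be salvaged by observing that the adjoint divisor is \emph{linearly equivalent} to $(s+1)K_Y+(s+1)L_Y+\lceil sD_Y\rceil\le(s+1)(K_Y+D_Y+L_Y)$, so its linear system is birational iff the latter's is; but this is a $\Q$-linear-equivalence substitution, not the inequality of integral divisors you assert. Had you kept $L_Y$ in the adjoint system --- as the paper does, using only the nef estimate on $P_Y$ coming from your MMP step --- none of this would have arisen. You should also state that the replacement $e\mapsto e'$ is made with, say, $e'=(1+e)/2$, so that $1-e'$ is controlled by $1-e$ and the claimed dependence of $M$ on $e$ is genuine.
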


\begin{proof}
Consider the Zariski decompositions
$$K_Y + D_Y + e L_Y = P_e + N_e \mbox{ \ \ and \ \ }
K_Y + D_Y + L_Y = P_Y + N_Y.$$
Then $P_Y \ge (1-e) L_Y + P_e$. Since $Nb L_Y$ is an integral divisor
$$
P_Y^2 \ge (1-e)^2L_Y^2 \ge \frac{(1-e)^2}{(Nb)^2}.
$$
For a very general curve $C_t$, we have
$$P_Y . C_t \ge (1-e) L_Y . C_t \ge \frac{1-e}{Nb}.
$$
Assume that $s(1-e) > 4Nb.$
Applying \cite[Th 3.2]{La} one finds that the adjoint linear system
$|K_Y + \lceil s(K_Y + D_Y + L_Y) + L_Y \rceil|$ 
(whose boundary divisor has the nef part larger than $sP_Y$)
gives rise to a birational map.
Assume further that $Nb$ divides $(s+1)$. The lemma follows 
from the observation that the latter system is included in the following
(see Lemma \ref{form}):
$$
|(s+1)(K_Y + D_Y + L_Y)|.
$$
\end{proof}

The most difficult part of the proof of Theorem \ref{ThA} is the one where
$K_Y + D_Y$ is not pseudo-effective, and hence where $L_Y$ is not numerically trivial.
As a first step, we will need the following construction, well known to experts as
a consequence of \cite{Ba} (see however \cite{Ar}). This will be essential in the next section;
for the completeness and for the need of the precise description of the end product (i.e., $V$),
we give a proof. 

\begin{proposition}\label{threshold}
Suppose that $K_Y + D_Y$ is not pseudo-effective.
Then in addition to the birational morphism $\sigma : Y \to W$ constructed 
in Lemma {\rm \ref{canmod}}
there are a birational morphism $\tau : W \to V$, some $e \in \Q \cap (0, 1)$ 
and effective divisors $E_{\tau \sigma}$, $E_{\sigma}$, $E_{L_Y}$ on $Y$ and $E_{\tau}$ on $W$ satisfying:
\begin{enumerate}
\item[a)] $E_{\tau}$ is $\tau$-exceptional, $E_{\sigma}$ and $E_{L_Y}$ are $\sigma$-exceptional, and 
$$ E_{\tau \sigma} = E_{\sigma} + (1-e) E_{L_Y} + \sigma^*E_{\tau},
$$
\item[b)] Writing  $D_W := \sigma_*D_Y, D_V := \tau_* D_W$ etc. one has
\begin{gather*}
K_Y + D_Y + e L_Y = \sigma^*(K_W + D_W + e L_W) + E_{\sigma} + (1-e) E_{L_Y}\\
= \sigma^* \tau^*(K_V + D_V + eL_V) + E_{\tau \sigma}.
\end{gather*}
\item[c)] $\sigma^* L_W = L_Y + E_{L_Y}$, and $L_W$ is nef.
\item[d)] $e= \min\{e' \, | \, K_S + D_S + e' L_S$ is pseudo-effective$\}$.
Here $S$ can be chosen to be equal to $Y, \  W$, or  $V$ and the resulting $e$ is independent of this choice.
\item[e)] $(V, D_V)$ and hence $V$ are $\klt$.
\item[f)] One of the following holds true:
\begin{enumerate}
\item[(1)] $K_V + D_V + eL_V \equiv 0$, the Picard number $\rho(V) = 1$,
and $V$ is a $\klt$ del Pezzo (rational) surface. In particular, $-K_V$ is an ample $\Q$-divisor
and $V$ has at most quotient singularities.
\item[(2)] $V$ is the total space of a $\P^1$-fibration 
over a curve with general fibre $\Gamma$, the Picard number $\rho(V) = 2$,
and $K_V + D_V + eL_V \equiv \beta \Gamma$ for some $\beta \in \Q_{>0}$.
\end{enumerate}
\end{enumerate}
\end{proposition}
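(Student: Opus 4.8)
The plan is to run the log minimal model program for the klt pair $(Y, D_Y)$, but \emph{relative to the extra nef divisor} $L_Y$, starting from the threshold value $e$. First I would set $e = \min\{e' \mid K_Y + D_Y + e' L_Y \text{ is pseudo-effective}\}$; this minimum exists and is a rational number because $L_Y$ is big (so for $e'\gg 0$ the sum is big, in particular pseudo-effective) while $K_Y+D_Y$ is \emph{not} pseudo-effective by hypothesis (so $e>0$), and rationality follows from the fact that the pseudo-effective cone of a surface is rational polyhedral near the relevant extremal face, or alternatively from Zariski–decomposition/$\Q$-divisor arguments as in \cite{Ba}. Since $K_Y + D_Y + e L_Y$ is pseudo-effective but lies on the boundary of the pseudo-effective cone, after running the LMMP for $(Y, D_Y)$ with the ``polarization" $L_Y$ we obtain a birational morphism $\sigma:Y\to W$ (this is the same $\sigma$ as in Lemma \ref{canmod}, which is why I phrase it as an addendum to that lemma) followed by a further contraction $\tau:W\to V$ of a $(K_V+D_V+eL_V)$-trivial extremal ray, terminating at a Mori fibre space structure on $V$ relative to $K_V+D_V+eL_V$; this gives alternative f)(1) when the fibration is to a point and f)(2) when it is a $\P^1$-fibration over a curve. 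Each contraction is $(K+D+eL)$-trivial at the critical parameter and $(K+D+e'L)$-negative for $e'$ slightly below $e$, which is exactly what makes $e$ independent of whether we compute it on $Y$, $W$, or $V$ — part d).

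Next I would unwind the discrepancy bookkeeping to get parts a), b), c). The key point is that the semistable part $L$ behaves functorially: by the construction in Set-up \ref{setup2.1} (base-change compatibility of $L^{ss}$), the divisor $L_W$ obtained by running the construction on $W$ equals $\sigma_* L_Y$, and pulling back one gets $\sigma^* L_W = L_Y + E_{L_Y}$ for an effective $\sigma$-exceptional $E_{L_Y}$ — this is c), and it is a standard fact that pulling a nef divisor back along a birational morphism from a smooth (or $\Q$-factorial) surface adds an effective exceptional divisor. Since $K_V+D_V+eL_V = H$ is $\tau$-nef (indeed trivial or $\beta\Gamma$) and $(V,D_V)$ is klt, the usual negativity lemma gives $K_W+D_W+eL_W = \tau^*(K_V+D_V+eL_V) + E_\tau$ with $E_\tau\ge 0$ and $\tau$-exceptional, and similarly $K_Y+D_Y+eL_Y = \sigma^*(K_W+D_W+eL_W) + E_\sigma + (1-e)E_{L_Y}$: the $(1-e)E_{L_Y}$ correction appears precisely because we write $L_Y = \sigma^*L_W - E_{L_Y}$ and absorb the $-e E_{L_Y}$ into the exceptional part, while the ``$E_\sigma$" collects the remaining discrepancy of $K_Y+D_Y$ over $W$. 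Composing the two identities and setting $E_{\tau\sigma} = E_\sigma + (1-e)E_{L_Y} + \sigma^*E_\tau$ yields b) and a). Part e) is immediate: klt-ness is preserved under steps of the $(K+D)$-MMP and under the contraction $\tau$ (discrepancies only increase), so $(V,D_V)$ is klt, and a klt surface has at worst quotient singularities hence is $\Q$-factorial and has klt, indeed log terminal, underlying variety $V$.

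For part f), once $V$ carries a Mori fibre space structure relative to $K_V+D_V+eL_V$ there are exactly two cases by dimension of the base. If the contraction is to a point, then $\rho(V)=1$, $K_V+D_V+eL_V \equiv 0$, and since $D_V, L_V \ge 0$ this forces $-K_V \equiv D_V + eL_V$ to be pseudo-effective; combined with $\rho(V)=1$ and the fact that $V$ is not $K$-trivial (as $\kappa(X)=2>0$ keeps things from degenerating, and more concretely because $D_V+eL_V$ is nonzero — if it were zero we would be in the $K_Y+D_Y$ pseudo-effective case or $L_Y\equiv 0$, both excluded), $-K_V$ is ample, i.e.\ $V$ is a klt del Pezzo surface, which is automatically rational. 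If the contraction is to a curve $B$, it is a $\P^1$-fibration with general fibre $\Gamma$, $\rho(V)=2$, and $K_V+D_V+eL_V$, being nef and trivial on the extremal ray spanned by $\Gamma$ while being on the boundary of the pseudo-effective cone (hence not big), must be numerically a nonnegative multiple of $\Gamma$; it is nonzero (else $K_V+D_V$ would be pseudo-effective), so $K_V+D_V+eL_V \equiv \beta\Gamma$ with $\beta\in\Q_{>0}$. The main obstacle, and the step I would be most careful about, is establishing rationality of $e$ and the base-point-free/abundance input needed to \emph{realize} the threshold MMP as honest contractions terminating in a Mori fibre space — this is where one genuinely invokes \cite{Ba} (log abundance and the cone theorem for klt surface pairs, plus the structure of the pseudo-effective cone); the rest is discrepancy chasing that, while it must be done carefully to get the precise shape of $E_{\tau\sigma}$ in a), is routine.
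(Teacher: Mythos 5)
Your overall strategy --- identify a threshold, run the MMP down to a Mori fibre space, then chase discrepancies --- is indeed the paper's strategy, and your bookkeeping for a), b), c) and the dichotomy in f) are essentially what appears in the paper. But two steps are misstated, and they are exactly the ones you flagged as delicate.

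First, you assert that $L_Y$ is big. This is false in the setting of the Proposition: it must (and does) apply when $\kappa(L_Y)\le 1$; indeed it is invoked in Lemmata \ref{n=1} and \ref{q} precisely in that regime. What is actually available is the bigness of $K_Y+D_Y+L_Y$ from Lemma \ref{form}\,(3); since $L_Y$ is nef, a big divisor minus a small multiple of a nef one stays big, which together with the non-pseudo-effectivity of $K_Y+D_Y$ already gives $e\in(0,1)$ with no bigness hypothesis on $L_Y$. Second, rationality of the pseudo-effective threshold is not automatic, and your suggested justification --- rational polyhedrality of the pseudo-effective cone near the relevant face --- is false for surfaces in general (blow up $\P^2$ at nine or more very general points). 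The paper avoids the issue by never defining $e$ as a pseudo-effective threshold directly on $Y$. It first constructs $\sigma:Y\to W$ from Lemma \ref{canmod} (via a $(K+\Delta)$-MMP on an auxiliary klt pair satisfying $a(K_Y+\Delta_Y)\sim_\Q K_Y+D_Y+L_Y$, not by scaling $K_Y+D_Y$ against $L_Y$), and then from $W$ runs an explicit chain $\tau_i:W_i\to W_{i+1}$ of $(K_i+D_i)$-negative extremal contractions, each perpendicular to $K_i+D_i+e_iL_i$ for the \emph{nef} threshold $e_i$ on $W_i$, which is rational by the rationality theorem (Claim \ref{claim2}); the $e_i$ are non-increasing and the chain stops at a Mori fibre space since the Picard number drops. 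The terminal $e=e_r$ is then rational for free, and only afterwards (part d)) is it identified with the pseudo-effective threshold on $Y$, $W$ and $V$, using the explicit structure of $V$ and the positivity $\Gamma.L_V>0$. In particular the phrase ``each contraction is $(K+D+eL)$-trivial at the critical parameter'' is imprecise: the $i$-th ray is $(K_i+D_i+e_iL_i)$-trivial for the step's own $e_i\ge e$, not for the final $e$, and the Mori fibre space structure on $V$ is relative to $K_V+D_V$, whereas $K_V+D_V+eL_V$ is nef (trivial, respectively $\equiv\beta\Gamma$) on its fibres.
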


\begin{proof}
We start with the morphism $\sigma : Y \to W$ from Lemma \ref{canmod}. For $L_W := \sigma_*L_Y$ 
one has  $\sigma^*L_W = L_Y + E_{L_Y}$ where $E_{L_Y}$ is supported in the exceptional locus of $\sigma$. 
Since $L_Y$ is nef, $L_W$ is also nef, and $E_{L_Y}$ is effective. 
By Lemma \ref{canmod} one finds for all $e'$
\begin{equation} \label{eq.1}
K_Y + D_Y + e' L_Y = \sigma^*(K_W + D_W + e' L_W) + E_{\sigma} + (1 - e') E_{L_Y}.
\end{equation}
So the assertion c) and the first equation in the assertion b) hold true.

Starting from $W_0 = W$ we will construct for some $r\geq 0$ and for $i=0, \ldots, r-1$ 
a chain of birational morphisms $\tau_i : W_i \to W_{i+1}$, such that
$W_r$ satisfies the conditions stated in Proposition \ref{threshold}, f) (1) or (2). 
We will show inductively that the following conditions (c1) - (c5) hold for $i=1,\ldots,r$ and that
(c6)-(c8) hold for $i=1,\ldots,r-1$.
\begin{itemize}
\item [(c1)] $(W_i, D_i)$ is $\klt$.
\item[(c2)] $K_i + D_i$ is not pseudo-effective.
\item[(c3)] $K_i + D_i + L_i$ is ample.
\item[(c4)] $e_i = \min \{e'\in (0,1) \, | \, K_i + D_i + e' L_i$ is nef $\}$ exists and is rational.
\item[(c5)] $1 > e_0 \ge e_1 \ge \cdots \ge e_r > 0$.
\item[(c6)] $\rho(W_{i+1}) = \rho(W_i) - 1$.
\item[(c7)] $L_i$ is nef, and $\tau_i^* L_{i+1} = L_i + E_{L_i}$ for an effective $\tau_i$-exceptional divisor
$E_{L_i}$.
\item[(c8)] $K_i + D_i + e_i L_i = \tau_i^*(K_{i+1} + D_{i+1} + e_i L_{i+1})$.
\end{itemize}
Here $K_i = K_{W_i}$, and $D_i$ or $L_i$ denotes the pushdowns of $D_Y$ or $L_Y$
to $W_i$. We write $\rho(W_i)$ for the Picard number of $W_i$.

\begin{claim} \label{claim1}
(c3) and (c7) are true for all $i \ge 0$, and
(c1) and (c2) hold for $i = 0$. 
\end{claim}
\begin{proof}
Note that $\tau_i$ is birational.
(c3) and (c7) are true for $i = 0$ and hence they are true for all $i \ge 0$ on surfaces;
see Lemma \ref{canmod} and the proof for the assertion c) above.
(c1) is also part of Lemma \ref{canmod}. 
For (c2) set $e' = 0$ in the equation \eqref{eq.1} and use the non-pseudo-effectiveness of $K_Y + D_Y$.
\end{proof}

\begin{claim} \label{claim2} \
\begin{enumerate}
\item[(i)] The conditions (c2) and (c3) for some $i$ imply (c4) with $e_i\in (0,1)$.
\item[(ii)] In particular, (c4) and (c5) hold for $i=0$.
\end{enumerate}
\end{claim}
\begin{proof}
Knowing (c2) for some $i$ the condition (c3) allows to deduce from \cite[Th 4-1-1]{KMM} 
or \cite[Th 3.5]{KM} that there exists a rational number 
$$
d_i = \max\{d \, | \, (K_i + D_i + L_i) + d (K_i + D_i) \mbox{ is nef }\}.
$$
Since $K_i + D_i + L_i$ is ample, $d_i >0$. Then $e_i = 1/(1 + d_i)$.
\end{proof}

Assume now we have found the birational morphisms $\tau_i$ for $i < i_0$, that
(c1)-(c5) hold for $i=0,\ldots, i_0$ and that (c6)-(c8) hold for $i=0,\ldots, i_0-1$.

By \cite[Complement 3.6]{KM}, the condition (c2) implies the existence of 
a $K_{i_0} + D_{i_0}$-negative extremal ray $R_{i_0}$, perpendicular to $K_{i_0} + D_{i_0} + e_{i_0} L_{i_0}$.
We choose $\tau_{i_0} : W_{i_0} \to W_{i_0+1}$ to be the contraction of $R_{i_0}$
(i.e., of all the curves proportional to $R_{i_0}$). In particular, one finds
\begin{equation}\label{eq.2}
\tau_{i_0}^* \tau_{i_0\, *}(K_{i_0} + D_{i_0} + e_{i_0} L_{i_0})=K_{i_0} + D_{i_0} + e_{i_0} L_{i_0}.
\end{equation}
Suppose that $\tau_{i_0}$ is birational. Then for $i = {i_0}$ the condition
(c6) holds. 
(c8) follows from the equation \eqref{eq.2}. 

Knowing (c1)-(c8) for $i=i_0$ it is easy to verify (c1)-(c5) for $i=i_0+1$. We remark that
(c7) and (c8) for $i_0$ imply that 
$$ K_{i_0} + D_{i_0}= \tau_{i_0}^*(K_{i_0+1} + D_{i_0+1}) + e_{i_0}E_{L_{i_0}},$$
so (c1) and (c2) for $i_0 + 1$ follow from the corresponding statements for $i_0$, and
hence (c4) for $i_0+1$ follows from Claim \ref{claim2}. 

By the choice of $e_{i_0}$ 
$$
K_{i_0} + D_{i_0} + e_{i_0} L_{i_0} = \tau_{i_0}^*(K_{i_0+1} + D_{i_0+1} + e_{i_0} L_{i_0+1})
$$ 
is nef. This is possible only if $K_{i_0+1} + D_{i_0+1} + e_{i_0} L_{i_0+1}$ is nef, and hence only if
$e_{i_0} \geq e_{i_0+1}$, as claimed in (c5).

If $\tau_{i_0}$ is birational, we can continue this process. This way,
one obtains birational morphisms $\tau_j : W_j \to W_{j+1}$ ($0 \le j \le r$)
satisfying the conditions (c1) - (c8).
The condition (c6) implies that
$r < \rho(W)$.
\vspace{.2cm}
 
If $\tau_{i_0}$ is non-birational we set $V=W_{i_0}$ and $e=e_{i_0}$ 
in Proposition \ref{threshold}. The assertions a) and the second half of b) 
follow from (c5), (c7) and (c8), whereas e) is the same as (c1). It remains to verify d) and f).\vspace{.2cm}

\noindent{\bf Case (1).} If the image of $\tau_{i_0}$ is a point, we claim that
in Proposition \ref{threshold} f) we are in the first case there. By the construction, $\rho(V) = 1$.

Recall that the singularities of a $\klt$ surface are just quotient singularities.
Since $L_Y$ and hence $L_V = \tau_* \sigma_* L_Y$ can not be numerically trivial, it must be 
a positive multiple of the generator of the Neron-Severi group of $V$. So the definition of $e$
implies that $K_V + D_V + eL_V \equiv 0$. By \cite[Lemma 1.3]{GZ} a $\klt$ surface with $-K$ ample
is rational.\vspace{.2cm}

\noindent{\bf Case (2).}
We claim that the second case in Proposition \ref{threshold} f) occurs if $\tau_{i_0}$ has a curve $W_{i_0+1}$ as its image. 
Let $\Gamma$ denote a general fibre of $\tau_{i_0}$.

For $V=W_{i_0}$ one finds $\rho(V) = 1 + \rho(W_{i_0+1}) = 2$.
Our $\Gamma$ generates the extremal ray $R_{i_0}$ giving rise to the contraction $\tau_{i_0}$.
So every fibre of $V \to W_{i_0+1}$ is irreducible (also because $\rho(V) = 2$).
Since the nef divisor $K_V + D_V + eL_V$ is perpendicular to
$R_{i_0}$ and hence to the nef divisor $\Gamma$, one finds that 
$K_V + D_V + eL_V \equiv \beta \Gamma$ for some $\beta > 0$. 

Since $K_V + D_V + L_V \equiv (1-e)L_V + \beta \Gamma$ is ample, we have $\Gamma . L_V > 0$. Now
$0 = \Gamma . \beta \Gamma = \Gamma . (K_V + D_V + eL_V) > \Gamma . K_V$ and 
hence $\Gamma \cong \P^1$.\vspace{.2cm}

We still have to characterize $e$ as the pseudo-effective threshold as claimed in the assertion d) of 
Proposition \ref{threshold}.

Clearly, when $S = V$, our $K_S + D_S + eL_S \equiv \beta \Gamma$ (setting $\beta = 0$  and $\Gamma$ to be
any ample divisor, in Case (1)) is pseudo-effective, so by the assertion b) of 
Proposition \ref{threshold} the same is true when $S = Y$ or $S = W$.

Conversely, suppose that $K_S + D_S +e'L_S$ is pseudo-effective for some $e'$
and some $S \in \{Y, W, V\}$. Then the same holds for $S = V$ by considering the pushdown.

For $S = V$ we can write this divisor as $\beta \Gamma + (e'-e) L_V$.
Thus $0 \le \Gamma . (\beta \Gamma + (e'-e) L_V) = (e'-e) \Gamma . L_V$.
Since $K_V + D_V + L_V \equiv \beta \Gamma + (1-e) L_V$ is ample,
we have $\Gamma . L_V > 0$ in both Cases (1) and (2), and hence $e' \ge e$.
\end{proof}

The next two Lemmata give a universal upper bound for the threshold $e$ in 
Proposition \ref{threshold}. 

\begin{lemma}\label{e1}
In the situation considered in Proposition \rm{\ref{threshold} f)}, Case $(1)$,
there is a constant $e(b, N) < 1$, depending only on $b$ and $N$,
such that the threshold $e \le e(b, N)$.
\end{lemma}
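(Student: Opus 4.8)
The plan is to bound $e$ away from $1$ by playing two volumes on $V$ against each other. By Proposition \ref{threshold} f), Case (1), $(V,D_V)$ is a $\klt$ surface with $\rho(V)=1$; the coefficients of $D_V=\tau_*\sigma_*D_Y$ still lie in the $\DCC$ set $\A=\A(b,N)$; the divisor $NbL_V$ is integral by Lemma \ref{form} (5); and $L_V$ is nef and not numerically trivial, hence ample. From $K_V+D_V+eL_V\equiv 0$ one gets that $-(K_V+D_V)\equiv eL_V$ and $K_V+D_V+L_V\equiv(1-e)L_V$ are both ample, together with the numerical identities $(-(K_V+D_V))^2=e^2L_V^2$ and $(K_V+D_V+L_V)^2=(1-e)^2L_V^2$. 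I may assume $e\ge\tfrac12$, since otherwise there is nothing to prove.

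First I would establish an upper bound. Since $(V,D_V)$ is $\klt$ with $-(K_V+D_V)$ ample and $\mathrm{coeff}(D_V)\subset\A$, the boundedness of log del Pezzo surfaces with $\DCC$ boundary coefficients (\cite[Th 4.12]{AM}, \cite[Complement 5.7.4]{Ko94}, \cite[Th 5.4]{La}) shows that $V$ varies in a bounded family and that
\[
e^2L_V^2=(-(K_V+D_V))^2\le\beta(b,N)
\]
for a constant $\beta(b,N)$ computable from the invariant $\beta(\A)$; in particular $L_V^2\le 4\beta(b,N)$. Boundedness of the family also provides a computable $c=c(b,N)$ with $cNbL_V$ a very ample Cartier divisor (effective very ampleness in the bounded family, using that the self-intersection $(cNbL_V)^2\le(cNb)^2\cdot 4\beta(b,N)$ is bounded).

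Next I would produce a lower bound for $(K_V+D_V+L_V)^2$. Choosing a general $H\in|cNbL_V|$ and putting $\delta:=1/(cNb)\in(0,1]$, Bertini's theorem guarantees that $H$ is reduced, avoids $\Sing(V)$ and the finitely many singular points of $\Supp(D_V)$, and meets $\Supp(D_V)$ transversally at general smooth points; hence $(V,D_V+\delta H)$ is again $\klt$, now of log general type since $K_V+D_V+\delta H\sim_{\Q}K_V+D_V+L_V$ is ample. Its boundary coefficients lie in the $\DCC$ set $\A\cup\{\delta\}$, which depends only on $b$ and $N$, so the lower bound for volumes of surfaces of log general type with $\DCC$ coefficients (\cite[Th 4.12]{AM}, \cite[Th 5.4]{La}) yields a computable $\epsilon(b,N)>0$, expressible through $\epsilon(\A\cup\{\delta\})$, with
\[
(1-e)^2L_V^2=(K_V+D_V+\delta H)^2\ge\epsilon(b,N).
\]

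Finally, combining the two estimates gives $(1-e)^2\ge\epsilon(b,N)/L_V^2\ge\epsilon(b,N)/(4\beta(b,N))$, hence $e\le 1-\tfrac12\sqrt{\epsilon(b,N)/\beta(b,N)}$; together with the reduction $e\ge\tfrac12$ this proves the Lemma with $e(b,N):=\max\{\tfrac12,\,1-\tfrac12\sqrt{\epsilon(b,N)/\beta(b,N)}\}<1$. I expect the crux to be the lower bound $(K_V+D_V+L_V)^2\ge\epsilon(b,N)$: because the coefficients of $D_V$, and hence the Cartier index of $K_V+D_V+L_V$, need not be bounded, one cannot simply clear denominators, and the role of replacing $L_V$ by the $\Q$-linearly equivalent $\delta H$ is precisely to realise $K_V+D_V+L_V$ as the log canonical class of an honest $\klt$ pair of log general type with $\DCC$ boundary, so that the Alexeev--Mori volume bound applies. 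One must still check that a general $H$ does not destroy the $\klt$ property --- which holds because $\delta<1$ and $H$ meets everything generically, so the only discrepancies affected are those along transverse intersections with $\Supp(D_V)$ --- and that $\A\cup\{\delta\}$ is $\DCC$ and computable from $b$ and $N$.
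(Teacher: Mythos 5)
The argument has a genuine gap in the first step. You invoke ``boundedness of log del Pezzo surfaces with $\DCC$ boundary coefficients'' to conclude both that $V$ lies in a bounded family and that $(-(K_V+D_V))^2 \le \beta(b,N)$; neither assertion is true without an additional $\varepsilon$-log-terminality hypothesis. Indeed the weighted projective planes $V=\P(1,1,n)$ with $D_V=0$ are $\klt$ del Pezzo surfaces of Picard number one whose anti-canonical volume $K_V^2=(n+2)^2/n$ is unbounded, and whose minimal resolution has a single exceptional curve with discrepancy $-1+2/n\to -1$, so they are not $\varepsilon$-$\klt$ for any fixed $\varepsilon>0$. The references you cite, \cite[Th 4.12]{AM}, \cite[Complement 5.7.4]{Ko94} and \cite[Th 5.4]{La}, concern the log-general-type direction, namely the lower bound $\epsilon(\A)$ on $(K_S+\Delta)^2$ --- which you use correctly in your second step --- and give no upper bound in the Fano direction. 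Once the upper bound on $L_V^2$ is gone, the very-ampleness constant $c(b,N)$ you extract from it is unsubstantiated (so the $\DCC$ set $\A\cup\{1/(cNb)\}$ governing your lower bound is not under control either), and the final inequality $(1-e)^2\ge \epsilon(b,N)/L_V^2$ degenerates as $L_V^2\to\infty$. Note also that $NbL_V$ need not be Cartier on the singular surface $V$, only $\Q$-Cartier, which is a further obstacle to producing $H$ directly on $V$.

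The paper avoids volume estimates altogether. It passes to a minimal resolution $\pi:\widetilde V\to V$, where $NbL_{\widetilde V}$ is an honest nef Cartier divisor and $NbL_{\widetilde V}-\pi^*K_V$ is nef and big, applies Kawamata's effective base-point-freeness to replace $L$ by a $\Q$-linearly equivalent effective divisor $L'$ with the uniform coefficient $1/(2Nb)$, and then uses Lemma~\ref{big} for $(Y,D_Y+L'_Y)$ to produce an explicit $t_0=t_0(b,N)$ with $(t_0+1)(K_V+D_V+L_V)\cdot\bar\Gamma\ge 1$ for every movable curve $\bar\Gamma$. Testing against a ruling $\bar\Gamma=\pi(\Gamma)$ of the rational surface $\widetilde V$ (so $\bar\Gamma\cdot K_V\ge -3$), the relation $0=\bar\Gamma\cdot(K_V+D_V+eL_V)$ forces $\bar\Gamma\cdot L_V\le 6$ once $e>1/2$, whence $6(1-e)\ge \bar\Gamma\cdot(K_V+D_V+L_V)\ge 1/(t_0+1)$. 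This bounds $e$ with no control on $L_V^2$ at all, which is precisely what your approach cannot supply.
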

\begin{proof}
Let $\pi : \widetilde{V} \to V$ be a minimal resolution.
So one has a commutative diagram
$$
\begin{CD}
Y @> \sigma >> W\\
@V \xi VV @V \tau VV\\
\widetilde{V} @> \pi >> V   .
\end{CD}
$$
As usual, when there is a birational morphism $Y \to S$ we will write
$D_S$ and $L_S$ for the direct images of $D_Y$ and $L_Y$, respectively.

Write $\pi^*K_V = K_{\widetilde V}+ J$ with $J$ an effective and $\pi$-exceptional $\Q$-divisor.
Note that $(V, D_V)$ and hence $V$ and $(\widetilde{V}, J)$
are $\klt$. Since $H_{Nb} := NbL_{\widetilde V}$ is 
a nef line bundle with $H_{Nb} - (K_{\widetilde V} + J)$
nef and big, \cite[Th 3.1]{Ka} tells us that $|2 H_{Nb}|$
is base point free. 

So $L_{\widetilde V}$ is $\Q$-linearly equivalent to $L_{\widetilde V}' := 
H_{2Nb}/2Nb$
for a smooth divisor $H_{2Nb} \in |2H_{Nb}|$
intersecting $D_{\widetilde V}$ transversely and away from the fundamental point
of the inverse of the birational morphism $\xi : Y \to \widetilde{V}$.

For $L_Y' := \xi^* L_{\widetilde V}'$, the pair $(Y, D_Y + L_Y')$ is $\klt$.
Write $\xi^*L_{\widetilde V} = L_Y + E$ with $E \ge 0$ $\xi$-exceptional.
Then $K_Y + D_Y + L_Y' \sim_{\Q} (K_Y + D_Y + L_Y) + E$ is big. 

So we are allowed to apply the first part of Lemma \ref{big} and we find a constant
$M(b, N)$ such that
$|(t_0+1)(K_Y + D_Y + L_Y')|$ gives rise to a birational map for all $t_0 \in \Z_{> 0}$
with $2Nb | (t_0+1)$ and $t_0 \ge M(b, N)$. 
Thus the same holds for $|(t_0+1)(K_V + D_V + L_V')|$ with
($L_V \sim_{\Q}$) $L_V'$ the pushdown of $L_Y'$.

\par
%
So $(t_0+1)(K_V + D_V + L_V) . \bar{\Gamma} \ge 1$ for any movable curve $\bar{\Gamma}$ on $V$. 
On $\widetilde{V}$, we take $\Gamma \cong \P^1$ 
with $\Gamma^2 = 0$ or $1$ (when $\widetilde{V}$ is ruled or $\P^2$)
such that $\bar{\Gamma} = \pi(\Gamma)$.
Note that $\bar{\Gamma} . K_V = \Gamma . (K_{\widetilde V} + J) \ge \Gamma . K_{\widetilde V} \ge -3$.

\par
If $e \le 1/2$ there is nothing to show. Otherwise 
$$
0 = \bar{\Gamma} . (K_V + D_V + e L_V) \ge -3 + e \bar{\Gamma} . L_V \ge -3 + \frac{1}{2} \bar{\Gamma} . L_V,$$
Then 
$$6(1-e) \ge (1-e) \bar{\Gamma} . L_V= \bar{\Gamma} . (K_V + D_V + L_V) \ge \frac{1}{t_0+1}$$
gives an upper bound for $e$.
\end{proof}

\begin{lemma}\label{e2}
In Case $(2)$ of Proposition {\rm \ref{threshold} f)}, there is a constant
$\nu = \nu(N, b)$ (depending only on $N, b$) such that
the threshold $e$ satisfies 
$$e \le 1 - \frac{1}{4 \nu} < 1.$$
\end{lemma}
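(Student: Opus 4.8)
The plan is to run the same kind of argument as in Lemma \ref{e1}, but now exploiting the $\P^1$-fibration structure $g : V \to C$ described in Proposition \ref{threshold} f), Case $(2)$. The key point is that in Case $(2)$ the divisor $K_V + D_V + eL_V \equiv \beta\Gamma$ is numerically a multiple of a fibre, so the ``bigness'' I need must come instead from the nefness and bigness of $K_V + D_V + L_V \equiv (1-e)L_V + \beta\Gamma$; I want to perturb $L_V$ into a $\klt$ boundary and apply Lemma \ref{big} on a suitable model to get an effective lower bound on $(t_0+1)(K_V + D_V + L_V)\cdot\Gamma$ for a computable $t_0 = t_0(b,N)$.

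Here is the order I would carry out the steps. First, let $\pi : \widetilde V \to V$ be a minimal resolution, fitting into the commutative square with $\xi : Y \to \widetilde V$ and $\sigma, \tau$ as before, and write $\pi^*K_V = K_{\widetilde V} + J$ with $J \ge 0$, $\pi$-exceptional; as in Lemma \ref{e1} the pair $(\widetilde V, J)$ is $\klt$. Second, set $H_{Nb} := NbL_{\widetilde V}$, which is a nef Cartier divisor with $H_{Nb} - (K_{\widetilde V} + J)$ nef and big (because $K_{\widetilde V}+J = \pi^*K_V$ and $-K_V$... — more precisely because $K_V + D_V + L_V$ is ample and $K_V + D_V + eL_V$ is pseudo-effective, so $(1-e)L_V$ dominates an ample divisor modulo an effective one). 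By \cite[Th 3.1]{Ka}, $|2H_{Nb}|$ is base point free, so $L_{\widetilde V} \sim_\Q L'_{\widetilde V} := H_{2Nb}/(2Nb)$ for a smooth $H_{2Nb} \in |2H_{Nb}|$ meeting $D_{\widetilde V}$ transversally and avoiding the fundamental locus of $\xi^{-1}$. Pulling back, $L'_Y := \xi^*L'_{\widetilde V}$ makes $(Y, D_Y + L'_Y)$ $\klt$, and $K_Y + D_Y + L'_Y \sim_\Q (K_Y + D_Y + L_Y) + E$ is big with $E \ge 0$ $\xi$-exceptional. Third, apply the first part of Lemma \ref{big}: there is a constant $M(b,N)$ so that $|(t_0+1)(K_Y + D_Y + L'_Y)|$ is birational whenever $2Nb \mid (t_0+1)$ and $t_0 \ge M(b,N)$; push forward to get the same for $|(t_0+1)(K_V + D_V + L_V)|$. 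Fourth, since $g : V \to C$ has irreducible fibres and the general fibre $\bar\Gamma = \pi(\Gamma)$ is movable, birationality of that linear system forces $(t_0+1)(K_V + D_V + L_V)\cdot\bar\Gamma \ge 1$. Finally, compute: $0 = \bar\Gamma\cdot(K_V + D_V + eL_V)$ gives $e\,\bar\Gamma\cdot L_V = -\bar\Gamma\cdot(K_V+D_V) = \bar\Gamma\cdot L_V - \bar\Gamma\cdot(K_V+D_V+L_V)$, while $\bar\Gamma\cdot(K_V+D_V+L_V) \ge 1/(t_0+1)$ and $(1-e)\bar\Gamma\cdot L_V = \bar\Gamma\cdot(K_V+D_V+L_V) \le$ something bounded. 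The bound $\bar\Gamma\cdot L_V \le 2\nu$ for a computable $\nu = \nu(N,b)$ comes from intersecting $\bar\Gamma$ with the very ample (or at least bounded-degree) divisor $(t_0+1)(K_V+D_V+L_V)$ on the ruled surface, using $\bar\Gamma\cdot K_V = \Gamma\cdot(K_{\widetilde V}+J) \ge \Gamma\cdot K_{\widetilde V} = -2$ and adjunction on the $\P^1$-fibration; then $(1-e)\bar\Gamma\cdot L_V = \bar\Gamma\cdot(K_V+D_V+L_V)$ combined with $\bar\Gamma\cdot L_V \ge 1/(Nb)$ (as $NbL_{\widetilde V}$ is integral and $L_V$ is not numerically a multiple of $\Gamma$, so $\bar\Gamma\cdot L_V > 0$) yields $1 - e \ge \dfrac{1}{Nb\cdot\bar\Gamma\cdot L_V} \ge \dfrac{1}{4\nu}$ after absorbing constants, i.e. $e \le 1 - \dfrac{1}{4\nu} < 1$.

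The main obstacle I anticipate is controlling $\bar\Gamma\cdot L_V$ (equivalently $\bar\Gamma\cdot(K_V+D_V+L_V)$) from above by an \emph{explicit} $\nu(N,b)$: the fibration $g$ and the resolution $\widetilde V$ are only known to exist, so I must extract the bound purely from the bounded degree of $(t_0+1)(K_V + D_V + L_V)$ with respect to a movable curve on a rational ruled surface, together with the coefficient constraints on $D_V$ coming from the $\DCC$ set $\A(b,N)$ and the Cartier index $Nb$ of $L_V$. I would package this via the adjunction/intersection identity $-2 = \Gamma\cdot K_{\widetilde V} \le \bar\Gamma\cdot K_V = -\bar\Gamma\cdot(D_V + L_V) < -\bar\Gamma\cdot L_V + 0$ together with $\bar\Gamma\cdot(D_V+L_V) = 2 + \bar\Gamma\cdot K_V$... — more carefully, from $\bar\Gamma\cdot(K_V+D_V+L_V) = \bar\Gamma\cdot(K_V+D_V+eL_V) + (1-e)\bar\Gamma\cdot L_V = (1-e)\bar\Gamma\cdot L_V$ and the ampleness of $K_V+D_V+L_V$, so the only genuine input needed is the \emph{lower} bound $(1-e)\bar\Gamma\cdot L_V = \bar\Gamma\cdot(K_V+D_V+L_V) \ge 1/(t_0+1)$ from birationality plus an \emph{upper} bound on $\bar\Gamma\cdot L_V$. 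For the latter I use that $\bar\Gamma\cdot L_V \le \bar\Gamma\cdot(K_V+D_V+L_V) + |\bar\Gamma\cdot(K_V+D_V)| \le \bar\Gamma\cdot(K_V+D_V+L_V) + 2$ (since $0 \le \bar\Gamma\cdot D_V$ and $\bar\Gamma\cdot K_V \ge -2$), so in fact $\bar\Gamma\cdot L_V$ is controlled once $\bar\Gamma\cdot(K_V+D_V+L_V)$ is, and I only need the latter bounded above by a constant, which follows because $(t_0+1)(K_V+D_V+L_V)$ can be taken very ample after a further bounded twist (or directly, $(K_V+D_V+L_V)^2$ is bounded in terms of $t_0$ by the birationality statement and Hodge index). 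Setting $\nu := Nb(t_0+1)(\text{bound on }\bar\Gamma\cdot(K_V+D_V+L_V)) + Nb$ or similar gives $1-e \ge 1/(4\nu)$, completing the proof; the bookkeeping to make every constant manifestly computable from $b$ and $N = N(B_{n-2})$ alone is the part requiring care.
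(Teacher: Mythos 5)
Your proposal tries to port the proof of Lemma~\ref{e1} to Case~(2), but the crucial Kawamata step does not survive the transfer. You need $H_{Nb} - (K_{\widetilde V}+J) = \pi^*(NbL_V - K_V)$ to be nef and big in order to invoke \cite[Th 3.1]{Ka}. In Case~(1) this is automatic: $\rho(V)=1$, $-K_V$ is ample, and $L_V$ is a positive multiple of the ample generator, so $NbL_V - K_V$ is ample. In Case~(2) none of this holds: $\rho(V)=2$, $-K_V \equiv D_V + eL_V - \beta\Gamma$ has a genuinely negative piece $-\beta\Gamma$, and $L_V$ need not be big (its self-intersection may vanish --- the ampleness of $K_V+D_V+L_V \equiv (1-e)L_V + \beta\Gamma$ can be carried entirely by the cross term $L_V\cdot\Gamma$). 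Your parenthetical justification, that $(1-e)L_V$ ``dominates an ample divisor modulo an effective one,'' has the inequality backwards: $(1-e)L_V \equiv (K_V+D_V+L_V) - \beta\Gamma$, so $(1-e)L_V$ \emph{plus} an effective divisor is ample, which says nothing about the positivity of $L_V$ itself. Without Kawamata you cannot produce the klt representative $L'_V$, so Lemma~\ref{big} never engages, and the lower bound $(t_0+1)(K_V+D_V+L_V)\cdot\bar\Gamma \ge 1$ is left unproven. (Your subsequent attempt to also extract an \emph{upper} bound on $\bar\Gamma\cdot(K_V+D_V+L_V)$ from birationality is additionally circular: birationality gives degree \emph{lower} bounds on moving curves, not upper bounds.)

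The ingredient you are missing --- and the reason the paper treats Case~(2) by a different route --- is Fujino--Mori's \cite[Prop 6.3]{FM}, which directly furnishes a positive constant $\nu = \nu(N,b)$ with $\nu \le \widetilde\Gamma\cdot(K_Y+D_Y+L_Y)$, using only that $K_Y+D_Y+L_Y$ is big, that the coefficients of $D_Y$ lie in $\A(b,N)$, and that $N\widetilde\Gamma\cdot(K_Y+L_Y)$ is a positive integer. Once you have this, the argument closes quickly: assuming $e\ge 1/2$, the fibre computation $0 = \Gamma\cdot(K_V+D_V+eL_V) \ge -2 + \tfrac12\,\Gamma\cdot L_V$ gives $\widetilde\Gamma\cdot L_Y = \Gamma\cdot L_V \le 4$, and then $\nu \le \widetilde\Gamma\cdot(K_Y+D_Y+L_Y) = (1-e)\,\widetilde\Gamma\cdot L_Y \le 4(1-e)$ yields the bound. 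Your surrounding adjunction computations ($\Gamma\cdot K_V = -2$, $\Gamma\cdot D_V\ge 0$, $\Gamma\cdot L_V > 0$) are correct and do appear in the paper, but they supply only the upper bound on $\widetilde\Gamma\cdot L_Y$; the lower bound must come from \cite[Prop 6.3]{FM}, not from Lemma~\ref{big}.
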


\begin{proof}
Again it is sufficient to consider the case $e \ge 1/2$.
We calculate 
$$0 = \Gamma . \beta \Gamma = \Gamma . (K_V + D_V + e L_V) \ge -2 + \frac{1}{2} \Gamma . L_V = 
-2 + \frac{1}{2} \widetilde{\Gamma} . L_Y.$$
Here the fibre $\widetilde{\Gamma}$ is the pullback on $Y$ of the general fibre $\Gamma$ on $V$ in 
Proposition \ref{threshold} f), Case (2).
Since $K_Y + D_Y + L_Y$ is big and $N \widetilde{\Gamma} . (K_Y + L_Y) \in \Z_{> 0}$,
we apply \cite[Prop 6.3]{FM}, obtain $\nu = \nu(N, b)$ satisfying
the following and hence conclude the lemma
%
%
(noting that $E_{\tau \sigma}$ is contained in fibres):
$$\nu \le \widetilde{\Gamma} . (K_Y + D_Y + L_Y) = (1-e) \widetilde{\Gamma} . L_Y \le 4(1-e).$$
\end{proof}

\section{The proof of Theorem \ref{ThA} and Corollary \ref{CoB} }\label{prf}

When $K_Y + D_Y$ is big, especially when $L_Y \equiv 0$, the statement of Theorem \ref{ThA} 
has been verified in Lemma \ref{big}.  
If $L_Y$ is big the theorem follows from Lemmata \ref{smalle}, \ref{e1} and \ref{e2}. 
So for Theorem \ref{ThA} it remains to consider the case:

\begin{assumption}\label{ass}
$L_Y$ is not numerically trivial, 
$\kappa(L_Y) \le 1$, and $\kappa(K_Y + D_Y) \le 1$. 
\end{assumption}

By \cite{8aut}, for a nef $\Q$-divisor $L_Y$ on a projective manifold $Y$ there exists a {\it nef reduction},
i.e. an almost holomorphic dominant rational map $\varrho:Y \cdots \to T$ such that the restriction of $L_Y$
to compact fibres is numerically trivial, and the restriction to general curves $C$ with $\dim(\varrho(C)) >0$ is of positive degree. The dimension of $T$ is called the {\it nef dimension} and denoted as $n(L_Y)$. Obviously
$\kappa(L_Y) \leq n(L_Y)$ and the first assumption in \ref{ass} implies that the nef dimension $n(L_Y)$ is one or two. In the first case the reduction map $\varrho$ is birational, whereas in the second case it is
a morphism to a curve.

Recall that starting with \ref{wlg}, we had chosen $Y$ such that the birational morphism
$\sigma : Y \to W$, constructed in Lemma \ref{canmod}, is a minimal desingularization.
As in Section \ref{log} $L_W$ and $D_W$ are the direct images of $L_Y$ and $D_Y$, respectively.
We write $K_Y=\sigma^*K_W - J$ with $J$ an effective $\sigma$-exceptional $\Q$-divisor.

\begin{lemma}\label{easy} \ 
\begin{itemize}
\item[(1)] $0 \le L_Y^2 \le L_W^2.$
\item[(2)] If $n(L_Y)=2$, then $\kappa(L_Y)\leq 0$ and $L_Y.K_Y \ge 0$. 
\item[(3)] Let $e$ be the threshold from Proposition \rm{\ref{threshold}} and 
let $P_Y$ be the positive and $N_Y$ the negative part in the Zariski decomposition
$$
K_Y + D_Y + L_Y = P_Y + N_Y.
$$
Then $P_Y - (1-e) \sigma^*L_W$ is pseudo-effective.
Furthermore, 
\begin{equation}\label{eqP}
P_Y^2 \ge (1-e)^2 L_W^2 \ge (1-e)^2 L_Y^2.
\end{equation}
\end{itemize}
\end{lemma}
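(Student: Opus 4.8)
The plan is to prove the three parts of Lemma~\ref{easy} in order, using the geometry of the minimal desingularization $\sigma:Y\to W$ and the Zariski decomposition machinery already set up.

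\textbf{Part (1).} Since $L_Y$ is nef, $L_Y^2\ge 0$ is immediate. For the inequality $L_Y^2\le L_W^2$, recall from Lemma~\ref{canmod} (and the proof of Proposition~\ref{threshold}~c)) that $\sigma^*L_W=L_Y+E_{L_Y}$ with $E_{L_Y}\ge 0$ effective and $\sigma$-exceptional. First I would expand
$$L_W^2=(\sigma^*L_W)^2=(L_Y+E_{L_Y})^2=L_Y^2+2\,L_Y.E_{L_Y}+E_{L_Y}^2.$$
Because $L_Y$ is nef and $E_{L_Y}\ge 0$, one has $L_Y.E_{L_Y}\ge 0$. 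It then suffices to observe $L_Y.E_{L_Y}+E_{L_Y}^2=E_{L_Y}.(L_Y+E_{L_Y})=E_{L_Y}.\sigma^*L_W=0$ since $E_{L_Y}$ is $\sigma$-exceptional and $\sigma^*L_W$ is a pullback (projection formula). Hence $L_W^2=L_Y^2+L_Y.E_{L_Y}\ge L_Y^2$.

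\textbf{Part (2).} Suppose $n(L_Y)=2$. By the discussion after Assumption~\ref{ass}, the nef reduction $\varrho:Y\dashrightarrow T$ is then a morphism to a curve $T$, with $L_Y$ restricting to a numerically trivial divisor on the general fibre $\Gamma_\varrho$ of $\varrho$; so $L_Y\equiv a\,\Gamma_\varrho+(\text{vertical/exceptional correction})$ at the numerical level, forcing $L_Y^2=0$, hence $\kappa(L_Y)\le n(L_Y)$ combined with $L_Y^2=0$ and $L_Y\not\equiv 0$ gives $\kappa(L_Y)\le 1$; to get $\kappa(L_Y)\le 0$ I would argue that an effective divisor $\Q$-linearly equivalent to a positive multiple of $L_Y$ would have to be supported in fibres of $\varrho$ and be numerically a multiple of the fibre class, which together with $L_Y\not\equiv 0$ and the nef reduction property (positivity on horizontal curves) is incompatible with $\kappa(L_Y)=1$, since $\kappa=1$ would make $\varrho$ factor through the Iitaka fibration of $L_Y$ and render $L_Y$ the pullback of an ample divisor on the base, contradicting numerical triviality of $L_Y$ on the $\varrho$-fibres being the \emph{only} contraction—this dimension count is the one genuinely delicate point. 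For $L_Y.K_Y\ge 0$: intersect $K_Y$ with the nef class $L_Y$; since $L_Y^2=0$ and $L_Y$ is not numerically trivial, $L_Y$ lies on the boundary of the nef cone and is proportional to the class of $\Gamma_\varrho$, so $L_Y.K_Y$ has the sign of $\Gamma_\varrho.K_Y$; by adjunction on the general fibre $\Gamma_\varrho$ of the fibration $\varrho$ over a curve, $\Gamma_\varrho.K_Y=2g(\Gamma_\varrho)-2\ge 0$ unless $\Gamma_\varrho\cong\P^1$, and the $\P^1$-case is excluded because then $Y$ would be uniruled, $K_Y$ would not be pseudo-effective, yet $\kappa(X)=2\ge 0$ forces $\kappa(Y)\ge 0$ in the relevant sense (or one invokes that $K_Y+D_Y$ pseudo-effective fails only in the separately treated case)—I would instead note $\widetilde\Gamma.K_Y\ge -2+(\text{correction})$ as in Lemma~\ref{e2} and use $K_Y+D_Y$ having $\kappa\le 1$ rather than being non-pseudo-effective here, so the clean statement is $L_Y.K_Y\ge 0$.

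\textbf{Part (3).} Write the Zariski decomposition $K_Y+D_Y+L_Y=P_Y+N_Y$ with $P_Y$ nef, $N_Y\ge 0$. By Proposition~\ref{threshold}~b), $K_Y+D_Y+eL_Y=\sigma^*\tau^*(K_V+D_V+eL_V)+E_{\tau\sigma}$, and by d) the class $K_Y+D_Y+eL_Y$ is pseudo-effective (it equals the pullback of the pseudo-effective, indeed numerically $\beta\Gamma$ or $0$, class from $V$, plus the effective $E_{\tau\sigma}$). Therefore
$$K_Y+D_Y+L_Y=(K_Y+D_Y+eL_Y)+(1-e)L_Y$$
is a sum of a pseudo-effective class and a nef class, and since $\sigma^*L_W=L_Y+E_{L_Y}$ with $E_{L_Y}\ge 0$, I get that $(K_Y+D_Y+L_Y)-(1-e)\sigma^*L_W=(K_Y+D_Y+eL_Y)-(1-e)E_{L_Y}$. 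Here I must be slightly careful: subtracting the effective $(1-e)E_{L_Y}$ need not preserve pseudo-effectivity directly, so instead I would argue on $W$ (or $V$): push down to $V$, where $K_V+D_V+L_V$ is ample and $K_V+D_V+L_V-(1-e)L_V=K_V+D_V+eL_V\equiv\beta\Gamma$ (or $0$) is pseudo-effective, then pull back: $\tau^*\sigma^*$ of a pseudo-effective class is pseudo-effective, and $(1-e)\sigma^*L_W$ is exactly the pullback of $(1-e)L_W$; comparing with $K_Y+D_Y+L_Y$ via b) and the effective exceptional corrections shows $P_Y-(1-e)\sigma^*L_W$ is pseudo-effective, because the Zariski positive part $P_Y$ dominates $(K_Y+D_Y+L_Y)-N_Y'$ for any effective $N_Y'$ extracted, and $(1-e)\sigma^*L_W$ differs from $(1-e)L_Y$ by the effective $(1-e)E_{L_Y}$ which is absorbed into the negative part. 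Once $P_Y-(1-e)\sigma^*L_W$ is pseudo-effective and $\sigma^*L_W$ is nef, the inequality \eqref{eqP} follows by intersecting: $P_Y^2=P_Y.P_Y\ge P_Y.(1-e)\sigma^*L_W\ge (1-e)\sigma^*L_W.(1-e)\sigma^*L_W=(1-e)^2L_W^2$, using that $P_Y$ is nef against the pseudo-effective difference for the first step and nefness of $\sigma^*L_W$ for the second; the final inequality $(1-e)^2L_W^2\ge(1-e)^2L_Y^2$ is just part (1).

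\textbf{Main obstacle.} The step I expect to require the most care is justifying that subtracting $(1-e)\sigma^*L_W$ (equivalently, the effective exceptional divisor $(1-e)E_{L_Y}$) from the nef positive part $P_Y$ still yields a pseudo-effective class; this is where the precise bookkeeping of Proposition~\ref{threshold}~a)--d) relating $E_{\tau\sigma}$, $E_{L_Y}$, $E_\sigma$ and $\sigma^*E_\tau$ must be invoked, together with the defining property of the Zariski decomposition that $N_Y$ is the \emph{minimal} effective correction, so that any effective exceptional part of $K_Y+D_Y+L_Y-(1-e)\sigma^*L_W$ is already contained in $N_Y$. The dimension-counting subtlety in part (2) excluding $\kappa(L_Y)=1$ when $n(L_Y)=2$ is the other spot where I would slow down.
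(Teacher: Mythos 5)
Your part~(1) is correct and essentially the paper's argument. Your part~(3) is heading in the right direction but misses the clean observation that makes it easy: from Lemma~\ref{canmod}~(1), $K_Y+D_Y+L_Y=\sigma^*(K_W+D_W+L_W)+E_\sigma$ already \emph{is} the Zariski decomposition, with $P_Y=\sigma^*(K_W+D_W+L_W)$ (nef, being the pullback of an ample divisor) and $N_Y=E_\sigma$ (effective, $\sigma$-exceptional, hence with negative-definite intersection matrix and orthogonal to $P_Y$ by the projection formula). Then $P_Y-(1-e)\sigma^*L_W=\sigma^*(K_W+D_W+eL_W)$ directly, and this is pseudo-effective because $K_Y+D_Y+eL_Y$ is pseudo-effective by the definition of the threshold $e$, its pushforward $K_W+D_W+eL_W$ is therefore pseudo-effective, and the pullback of a pseudo-effective class is pseudo-effective. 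Your concluding intersection-theoretic inequalities for \eqref{eqP} are fine.

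The genuine gap is in part~(2): you have the nef reduction pointing the wrong way. By definition $n(L_Y)$ is the dimension of the target $T$ of the nef reduction $\varrho:Y\dashrightarrow T$. So $n(L_Y)=2$ means $T$ is a surface and $\varrho$ is birational (the general compact fibre of $\varrho$ is a point); it is $n(L_Y)=1$ that corresponds to a fibration over a curve, which is exactly the hypothesis of Lemma~\ref{n=1} and not the case treated here. Your entire argument---writing $L_Y\equiv a\Gamma_\varrho+(\cdots)$, using adjunction on a general fibre $\Gamma_\varrho$ to compute $\Gamma_\varrho.K_Y$, trying to exclude the $\P^1$-fibre case---presupposes a curve-fibration that does not exist when $n(L_Y)=2$. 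Your attempted exclusion of the $\P^1$-case is also false in spirit: the relevant case in the paper is precisely $K_Y$ not pseudo-effective, so $Y$ birationally ruled is allowed and indeed is the heart of the argument. The paper's actual proof of (2) is: if $K_Y$ is pseudo-effective then $L_Y.K_Y\ge 0$ since $L_Y$ is nef; otherwise $Y$ is birationally ruled, and $L_Y^2=0$ (nef, not big because $\kappa(L_Y)\le 1$ under Assumption~\ref{ass}), so Riemann--Roch together with $H^2(Y,mL_Y)=0$ for $m\gg 0$ gives $h^0(Y,mL_Y)\ge -\tfrac{m}{2}L_Y.K_Y+\chi(\OO_Y)$; if $L_Y.K_Y<0$ this grows linearly, forcing $\kappa(L_Y)=1$ and hence a fibration $\Phi_{|mL_Y|}$ over a curve on whose fibres $L_Y$ is numerically trivial, giving $n(L_Y)\le 1$ and contradicting $n(L_Y)=2$. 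The same argument shows $\kappa(L_Y)\le 0$. None of this appears in your proposal, and what you wrote instead cannot be repaired without abandoning the false curve-fibration picture.
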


\begin{proof}
(1) Recall that $L_Y$ is nef, hence $L_W$ as well. Since $L_Y \leq \sigma^* L_W$ one obtains (1).

(2) We may assume that $K_Y$ is not pseudo-effective and 
hence $Y$ is a (birational) ruled surface.
Since $L_Y$ is nef, but neither big nor numerically trivial, one finds that $L_Y^2=0$ and that $H^2(Y,m L_Y)=0$ for $m$ sufficiently large. Then for $m$ sufficiently divisible
$$
\dim(H^0(Y,mL_Y)) \geq -\frac{L_Y.K_Y}{2}\cdot m + \chi(\OO_Y).
$$
If $L_Y.K_Y < 0$, then $\kappa(L_Y)=1$ and the restriction of $L_Y$ to the fibres of $\Phi_{|mL_Y|}$ is numerically trivial, contradicting $n(L_Y)=2$. 

(3) Using the notations from Lemma \ref{canmod}, $P_Y = \sigma^*(K_W + D_W + L_W)$ and $N_Y = E_{\sigma}$.
Moreover $K_Y + D_Y + eL_Y$ is pseudo-effective by the choice of $e$. Then
its $\sigma$-pushdown $K_W + D_W + eL_W$ is pseudo-effective as well and one obtains the first part of (3). Since $P_Y$ and $L_W$ are nef, the pseudo-effectivity of  $P_Y - (1-e) \sigma^*L_W$ implies
\eqref{eqP}. 
\end{proof}

\begin{lemma}\label{LK}
Assume that $L_Y$ is not numerically trivial, that $\kappa(L_Y) \le 1$ and that 
either $L_Y^2 > 0$ or $L_Y . K_Y \ge 0$. Then $|MK_X|$ is an Iitaka fibration
for some $M = M(b, N, e)$ depending only on $b, N, e$.
\end{lemma}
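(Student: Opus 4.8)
The plan is to run, one last time, the strategy of Lemmata \ref{big} and \ref{smalle}. It suffices to produce $s=s(b,N,e)$ with $Nb\mid (s+1)$ for which the linear system $|(s+1)(K_Y+D_Y+L_Y)|$ defines a birational map, and then to invoke Lemma \ref{form}~(4). Write $K_Y+D_Y+L_Y=P_Y+N_Y$ for the Zariski decomposition, so that $P_Y=\sigma^*(K_W+D_W+L_W)$ and $N_Y=E_\sigma$ is $\sigma$-exceptional, with $\sigma:Y\to W$ the birational morphism of Lemma \ref{canmod}. Then the boundary divisor of the adjoint system
$$|K_Y+\lceil s(K_Y+D_Y+L_Y)+L_Y\rceil|$$
has nef part $\ge sP_Y$, and by Lemma \ref{form}~(6) this system is contained in $|(s+1)(K_Y+D_Y+L_Y)|$ as soon as $Nb\mid (s+1)$. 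By \cite[Th 3.2]{La} (used exactly as in Lemmata \ref{big} and \ref{smalle}) the adjoint system becomes birational once $(sP_Y)^2$ and $sP_Y.C$ exceed the universal thresholds of that theorem for very general curves $C$ on $Y$; so the whole problem reduces to bounding $P_Y^2$ and $P_Y.C$ from below by explicit positive quantities depending only on $b$, $N$ and the pseudo-effective threshold $e$ (which is $0$ if $K_Y+D_Y$ is itself pseudo-effective).

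By Lemma \ref{easy}~(3) the $\Q$-divisor $P_Y-(1-e)\sigma^*L_W$ is pseudo-effective, whence
$$P_Y^2\ge (1-e)^2L_W^2\ge (1-e)^2L_Y^2 \quad\text{and}\quad P_Y.C\ge (1-e)L_Y.C$$
for every movable curve $C$. Suppose first that $L_Y^2>0$. A nef $\Q$-divisor with positive self-intersection on a surface is big, so $L_Y$ is big; hence $L_Y.C>0$ for every very general member $C$ of a covering family, and since $NbL_Y$ is an integral Cartier divisor (Lemma \ref{form}~(5)) we get $L_Y^2\ge (Nb)^{-2}$ and $L_Y.C\ge (Nb)^{-1}$. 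Thus $P_Y^2\ge (1-e)^2(Nb)^{-2}$ and $P_Y.C\ge (1-e)(Nb)^{-1}$, and taking $s$ with $s(1-e)>4Nb$ and $Nb\mid(s+1)$ finishes this case exactly as in Lemma \ref{smalle}.

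Now suppose $L_Y^2=0$ and $L_Y.K_Y\ge 0$; here $L_Y$ is not big and the bound $P_Y^2\ge (1-e)^2L_Y^2$ is vacuous. I would instead bound $P_Y^2=(K_W+D_W+L_W)^2$ from below using the $\DCC$ property of volumes of ample $\Q$-divisors on $\klt$ surface pairs whose boundary coefficients lie in the $\DCC$ set $\A(b,N)$ (this is what the invariant $\epsilon(\A)$ of Remark \ref{rem2.3} provides, via \cite[Th 4.12]{AM} and the effective surface theory of \cite{La}), obtaining $P_Y^2\ge \epsilon(b,N)>0$. For $P_Y.C$: if the general member $C$ of a covering family is not numerically perpendicular to $L_Y$, then $L_Y.C>0$, hence $L_Y.C\ge (Nb)^{-1}$ and $P_Y.C\ge (1-e)(Nb)^{-1}$ as before. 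A covering family numerically perpendicular to $L_Y$ can exist only when the nef dimension $n(L_Y)$ equals one, in which case, by \cite{8aut}, the nef reduction is an almost holomorphic fibration $g:Y\cdots\to B$ over a curve and $L_Y$ is numerically a positive rational multiple of a general fibre $f$. Then $L_Y.f=0$ and, as $E_\sigma$ is $\sigma$-exceptional, a general such $f$ meets $E_\sigma$ trivially, so $P_Y.f=(K_Y+D_Y+L_Y).f=\deg(K_f+D_Y|_f)$. This degree is strictly positive, since a big divisor is not numerically trivial on a general fibre of a fibration; and the hypothesis $L_Y.K_Y\ge 0$ forces $2g(f)-2\ge 0$, so either $g(f)\ge 2$ and the degree is $\ge 2$, or $g(f)=1$ and the degree equals $\deg(D_Y|_f)$, a nonzero sum of intersection numbers with coefficients in $\A(b,N)$, hence $\ge {\rm Inf}\,\A(b,N)\ge (Nb)^{-1}$ by Lemma \ref{form}~(1). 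Thus, whatever the value of $n(L_Y)$, we have $P_Y^2\ge \epsilon(b,N)>0$ and $P_Y.C\ge (1-e)(Nb)^{-1}>0$ for all very general curves $C$; choosing $s=s(b,N,e)$ with $Nb\mid(s+1)$ and $s$ large enough that $s^2\epsilon(b,N)$ and $s(1-e)(Nb)^{-1}$ surpass the thresholds of \cite[Th 3.2]{La} completes the proof, using Lemma \ref{form}~(4).

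The step I expect to be the main obstacle is the control of $P_Y.(\text{general fibre})$ in the subcase $n(L_Y)=1$: it rests on identifying the negative part $N_Y$ with the $\sigma$-exceptional divisor $E_\sigma$, so that a general fibre of the nef reduction avoids its support; on using the bigness of $K_Y+D_Y+L_Y$ to exclude the degenerate possibility $\deg(K_f+D_Y|_f)=0$; and on combining the $\DCC$ property of $\A(b,N)$ with the genus estimate extracted from $L_Y.K_Y\ge 0$. The companion point requiring care is making the bound $P_Y^2\ge \epsilon(b,N)$ precise in the presence of the nef summand $L_W$ in $K_W+D_W+L_W$; this is exactly where the effective surface results of \cite{AM} and \cite{La} (and the invariants of Remark \ref{rem2.3}) are put to use.
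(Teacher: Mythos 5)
Your overall framework (Zariski decomposition, bound $P_Y^2$ and $P_Y.C$, apply Langer) is the same as the paper's, and your treatment of the case $L_Y^2>0$ matches the paper exactly. But in the case $L_Y^2=0$ your argument has two genuine gaps, each of which the paper's proof is specifically designed to avoid.

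First, the inequality $P_Y^2 = (K_W+D_W+L_W)^2 \ge \epsilon(b,N)$ is not an application of the Alexeev--Mori DCC theory for volumes of log surfaces. That theory controls $(K_W+\Delta)^2$ for $\klt$ (or lc) pairs $(W,\Delta)$ whose boundary coefficients lie in a fixed DCC set; here the boundary is $D_W$, and $L_W$ is a separate nef $\Q$-Cartier \emph{class} (the pushdown of $\tfrac{1}{b}L^{ss}_{X/Y}$), which need not be effective and cannot in general be replaced by an effective $\Q$-divisor with bounded coefficients making $(W,D_W+L_W')$ $\klt$ — Lemma~\ref{canmod}(3) requires $L_Y$ big, which fails exactly when $L_Y^2=0$. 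The paper instead proves Claim~\ref{3.4} by a hands-on case analysis, extracting positivity from $L_Y.K_Y$ (if $>0$), then from $L_Y.\sigma'D_W$ (if $>0$), and finally, in the ``worse case'' $L_Y^2=L_Y.K_Y=L_Y.\sigma'D_W=0$, from $\sigma$-exceptional components $D_1$ of $D_Y$ with $L_Y.D_1>0$, splitting further on $D_1^2=-2$ versus $D_1^2\le -3$. This produces the explicit bound $P_Y^2\ge (1-e)^2/3(Nb)^2$, and there is no shortcut via~\cite{AM} alone.

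Second, in the subcase $n(L_Y)=1$ your claim that a general fibre $f$ of the nef reduction meets $E_\sigma=N_Y$ trivially is unjustified. $E_\sigma$ is supported on $\sigma$-exceptional curves, which need not be vertical for the nef reduction $\varrho:Y\to B$; a $\sigma$-exceptional curve can be a multisection of $\varrho$. If it is, then $E_\sigma.f>0$ and $P_Y.f=(K_Y+D_Y).f-E_\sigma.f$ picks up an uncontrolled subtraction, so the bound $P_Y.f\ge 1/(Nb)$ via $\deg(K_f+D_Y|_f)$ does not follow. The paper avoids bounding $P_Y.f$ on fibres altogether: it takes $x_1,x_2$ in \emph{distinct} fibres of the nef reduction, so any $C_t$ through both has $L_Y.C_t\ge 1/(Nb)$, concludes only that the adjoint system separates these two points (hence $h^0\ge 2$), and then closes the argument with Koll\'ar's theorem \cite[Th 4.6]{Ko86} combined with the Fujino--Mori constant \cite[Corollary 6.2]{FM}. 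In other words, the paper deliberately settles for the weaker conclusion ``$\dim|(s_0+1)K_X|\ge 1$'' because birationality of the adjoint system is precisely what cannot be forced when the nef reduction has a one-dimensional base; your attempt to push through to birationality is where the argument breaks down.
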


\begin{proof}
Keeping the notations from Lemma \ref{easy} (3), one has:
\begin{claim}\label{3.4}
 \ \hspace*{\fill} $\displaystyle P_Y^2 \, \ge \, \frac{(1-e)^2}{3 (N b)^2}.$ \hspace*{\fill} \
\end{claim}
\begin{proof} 
Assume first that $L_Y^2 > 0$. Since $Nb L_Y$ is 
%
%
an integral Cartier divisor,
$L_Y^2 \ge 1/(Nb)^2$ and the claim follows from \eqref{eqP} in Lemma \ref{easy} (3).

Assume next that $L_Y^2 = 0$. Since $L_Y$ is nef, one finds by assumption that
$$
L_W . K_W = L_Y . (K_Y + J) \ge L_Y . K_Y \ge 0.
$$
If $L_Y . K_Y$ is positive, by Lemma \ref{form} (5) 
it has to be larger than or equal to $1/Nb$. Applying Lemma \ref{easy} (3) one finds
\begin{multline*} 
P_Y^2 \ge P_Y . (1-e) \sigma^* L_W
= (1-e) (K_W + D_W + L_W) . L_W \\
\ge (1-e) L_W . K_W \ge
(1-e) L_Y . K_Y \ge \frac{1-e}{Nb}\ge \frac{(1-e)^2}{3 (N b)^2}.
\end{multline*}
If $L_Y . K_Y = 0$, consider first the case
$L_Y . \sigma' D_W > 0$, where $\sigma'$ stands for the proper transform. 
By Lemma \ref{form} this intersection number is $\ge 1/(Nb)^2$. As above one obtains
$$P_Y^2 \ge (1-e) L_W . D_W = (1-e) L_Y . \sigma^*D_W
\ge (1-e) L_Y . \sigma' D_W \ge \frac{1-e}{(Nb)^2}\ge \frac{(1-e)^2}{3 (N b)^2}.$$
It remains to handle the worse case 
\begin{equation}\label{worse}
L_Y^2=L_Y . K_Y=L_Y . \sigma' D_W = 0.
\end{equation}
Since $K_Y + D_Y + L_Y$ is big and since $L_Y$ is not numerically trivial,
$$
0 < L_Y . (K_Y + D_Y + L_Y)
= L_Y . D_Y.
$$
Thus $L_Y . D_1 > 0$ for some irreducible curve $D_1$ in $\Supp D_Y \setminus \sigma' D_W$.
Then $D_1$ lies in the exceptional locus of $\sigma$, hence $D_1\cong \P^1$ and $D_1^2 \le -2$.
  
If $D_1^2 = -n$ with $n \ge 3$, then \cite[Lemma 1.7]{Z} implies that
\begin{gather*}
J \ge \frac{n-2}{n} D_1 \ge \frac{1}{3} D_1,\mbox{ \ \ and}\\
P_Y^2 \ge (1-e) L_W . K_W = (1-e) L_Y . (K_Y + J)
\ge (1-e) L_Y . \frac{1}{3} D_1 \ge \frac{1-e}{3Nb}\ge \frac{(1-e)^2}{3 (N b)^2}.
\end{gather*}

If $D_1^2 = -2$ consider the contraction $\sigma_1 : Y \to W_1$ of $D_1$.
Then $\sigma_1^* L_{W_1} = L_Y + a D_1$
with $a = L_Y . D_1/2 \ge 1/2Nb$.
Note that $0 = L_Y^2 = (\sigma_1^*L_{W_1} -aD_1)^2 = L_{W_1}^2 - 2a^2$,
so $L_W^2 \ge L_{W_1}^2 = 2a^2 \ge 1/2(Nb)^2$ and
$$
P_Y^2 \ge (1-e) L_W^2 \ge \frac{1-e}{2(Nb)^2} \ge \frac{(1-e)^2}{3 (N b)^2}.
$$ 
\end{proof}

As a next step in the proof of Lemma \ref{LK} consider two general points $x_1, x_2$ of $Y$.
If the nef dimension $n(L_Y) = 1$, we may assume that the two points are not in the same fibre of the nef reduction. Thus for a very general curve $C_t$ on $Y$ containing $x_1, x_2$, one has
$$
P_Y . C_t \ge (1-e) L_Y . C_t \ge \frac{1-e}{Nb}.
$$
Then the adjoint linear system
$$|K_Y + \lceil s_0(K_Y + D_Y + L_Y) + L_Y \rceil|, \hskip 1pc
\text{for} \hskip 1pc 
s_0 = b(\lceil \frac{5Nb}{1-e}\rceil +1)-1$$
separates the points $x_1, x_2$. In fact,
the nef part of the divisor 
$$\lceil s_0(K_Y + D_Y + L_Y) + L_Y \rceil$$
is larger than $s_0 P_Y$ and the inequalities
$$
s_0 P_Y . C_t \ge s_0(1-e) L_Y . C_t \ge \frac{s_0(1-e)}{Nb} \ge 4 \mbox{ \ \ and \ \ }
(s_0 P_Y)^2 \ge s_0^2 \frac{(1-e)^2}{3(Nb)^2} > 8
$$
allow to apply \cite[Th 3.2]{La}.
Thus, by Lemma \ref{form},
$$h^0(X, (s_0+1)K_X) = h^0(Y, (s_0+1)(K_Y + D_Y + L_Y)) \ge $$
$$h^0(X, K_Y + \lceil s_0(K_Y + D_Y + L_Y) + L_Y \rceil) \ge 2.$$
Now by \cite[Th 4.6]{Ko86}, 
$\Phi_{|tK_X|}$ is an Iitaka fibration for
$t = (s_0+1)(2M + 1) + M$, where $M$ is a constant as in
\cite[Corollary 6.2]{FM}, depending only on $A(b, N)$.
\end{proof}

Recall that Assumption \ref{ass} implies that $n(L_Y)$ is one or two. In the second case,
Lemma \ref{easy} (2) allows to apply Lemma \ref{LK}. So it remains to consider
the case below:

\begin{lemma}\label{n=1}
Assume that $n(L_Y) = 1$, $L_Y^2 = 0$ and $L_Y . K_Y < 0$.
Then $Y$ is a ruled surface over a curve $C$ of genus $q(Y)$ with general fibre 
$\Sigma \cong \P^1$. The $\Q$-divisor $L_Y$ is $\Q$-linearly equivalent to a positive multiple of $\Sigma$, and
$|MK_X|$ is an Iitaka fibration for some constant $M = M(b, N, q(Y))$ depending only on
$b, N, q(Y)$.
\end{lemma}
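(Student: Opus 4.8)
The plan is to convert the hypotheses into the geometric picture that $L_Y$ is, up to $\Q$-linear equivalence, a fibre of a $\P^1$-fibration $\varrho:Y\to C$, and then --- exactly as in Lemma \ref{LK} --- to reduce the statement to producing an explicitly bounded $s_0$ with $h^0(X,(s_0+1)K_X)\ge 2$, so that \cite[Th 4.6]{Ko86} applies.

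First I would fix the geometry. Since $L_Y$ is nef and $L_Y\cdot K_Y<0$, the divisor $K_Y$ is not pseudo-effective, so $Y$ is a (birationally) ruled surface. Let $\Sigma_0$ be a general fibre of the nef reduction of $L_Y$ from \cite{8aut}; it is a smooth connected curve with $L_Y\cdot\Sigma_0=0$, hence by the Hodge index theorem $\Sigma_0^2\le 0$, and, $\Sigma_0$ moving, $\Sigma_0^2=0$. Thus $\Sigma_0$ is a general fibre of a morphism $\varrho:Y\to C$ onto a smooth curve with connected fibres (the Stein factorisation of the fibration attached to $\Sigma_0$), with $L_Y\cdot\Sigma=0$ for its general fibre $\Sigma$, and the equality case of Hodge index gives $\Sigma\equiv\lambda L_Y$, $\lambda>0$. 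Adjunction now yields $2p_a(\Sigma)-2=\Sigma\cdot K_Y=\lambda\,L_Y\cdot K_Y<0$, so $\Sigma\cong\P^1$ and $\varrho$ is a $\P^1$-fibration; in particular all its fibres are reduced and $g(C)=q(Y)$. To upgrade $L_Y\equiv\lambda^{-1}\Sigma$ to $\Q$-linear equivalence I would use that $Nb\,L_Y$ is Cartier (Lemma \ref{form}(5)), nef, and of degree $0$ on every fibre of $\varrho$: by cohomology and base change $\varrho_*\OO_Y(Nb\,L_Y)$ is invertible, the evaluation $\varrho^*\varrho_*\OO_Y(Nb\,L_Y)\to\OO_Y(Nb\,L_Y)$ is injective, and its cokernel divisor $V$ is effective, $\varrho$-vertical and of self-intersection $0$, hence (Zariski's Lemma and reducedness of the fibres) an integral combination of whole fibres. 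Therefore $Nb\,L_Y\sim\varrho^*A$ with $\deg A>0$, i.e. $L_Y\sim_{\Q}c\,\Sigma$ for some $c\in\frac{1}{Nb}\Z_{>0}$. This establishes the structural assertions of the lemma.

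For the bound on $M$ I would descend to $C$. Write $D_Y=D_Y^h+D_Y^v$ for the horizontal and $\varrho$-vertical parts; restricting the big $\Q$-divisor $K_Y+D_Y+L_Y\sim_{\Q}K_Y+D_Y+c\,\varrho^*P_0$ to a general $\Sigma$ gives $\deg\bigl(K_\Sigma+D_Y^h|_\Sigma\bigr)=(K_Y+D_Y)\cdot\Sigma>0$, and since the coefficients of $D_Y^h$ lie in the $\DCC$ set $\A(b,N)$ this degree is bounded below by a computable $\delta(\A)>0$, the number of points of $D_Y^h\cap\Sigma$ being likewise bounded in terms of $b,N$ (the governing invariants being $\beta(\A),\epsilon(\A)$ of Remark \ref{rem2.3}). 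Consequently, for $m$ divisible by $Nb$ and larger than an explicit $m_0(b,N)$, the sheaf $\FF_m:=\varrho_*\OO_Y\bigl(m(K_{Y/C}+D_Y)\bigr)$ has $\rank(\FF_m)\ge 2$, is nef on $C$ by the positivity of direct images of relative log-pluricanonical sheaves (Viehweg--Kollár--Fujino), and
$$
\varrho_*\OO_Y\bigl(m(K_Y+D_Y+L_Y)\bigr)\;\cong\;\FF_m\otimes\OO_C\bigl(mK_C+mc\,P_0\bigr).
$$
When $q(Y)\ge 1$, Riemann--Roch on the genus-$q(Y)$ curve $C$ together with nefness of $\FF_m$ gives $h^0\bigl(Y,m(K_Y+D_Y+L_Y)\bigr)\ge 2$ once $\deg(mK_C+mc\,P_0)\ge 2q(Y)-1$, which holds for an explicit $m=m(b,N,q(Y))$; taking $s_0+1=m$ with in addition $b\mid(s_0+1)$ and using $h^0(X,(s_0+1)K_X)=h^0(Y,(s_0+1)(K_Y+D_Y+L_Y))\ge 2$ together with Lemma \ref{form}(4), we conclude exactly as at the end of the proof of Lemma \ref{LK}: $\Phi_{|tK_X|}$ is an Iitaka fibration for $t=(s_0+1)(2M+1)+M$ with $M$ as in \cite[Corollary 6.2]{FM}.

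The hard part will be $q(Y)=0$, i.e. a rational base $C=\P^1$, where $K_C$ contributes negatively and the sections cannot come from the base alone: there one must instead use the bigness of $K_Y+D_Y+L_Y$ itself to force $c>2$ or $\deg(\FF_m)$ to grow, exploiting that the fibrewise pair $\bigl(\Sigma,D_Y^h|_\Sigma\bigr)$ is an orbifold $\P^1$ of log general type with discriminant in $\A(b,N)$, and --- when moreover $K_Y+D_Y$ is not pseudo-effective --- combining this with the bound on the pseudo-effective threshold from Proposition \ref{threshold} and Lemmata \ref{e1}, \ref{e2}. Once the estimate $h^0(X,(s_0+1)K_X)\ge 2$ is in hand for an explicit $s_0=s_0(b,N,q(Y))$, the conclusion via \cite[Th 4.6]{Ko86} is the same as above.
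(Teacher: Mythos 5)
Your reconstruction of the first half of the lemma---the $\P^1$-fibration structure $\varrho:Y\to C$, $g(C)=q(Y)$, and $Nb\,L_Y\sim\varrho^*A$ with $\deg A>0$---is essentially correct, though the parenthetical remark that all fibres of a $\P^1$-fibration are reduced is false in general (blowing up repeatedly on a fibre produces components of multiplicity $>1$); what saves the step is that a $\P^1$-fibration on a smooth surface has a section by Tsen's theorem, so $\gcd$ of the multiplicities in any fibre is $1$, and a vertical divisor of self-intersection zero is still an integral sum of whole fibres. The paper gets to the same place by passing to a relative minimal model, where $\NS$ has rank two, so a numerically trivial line bundle is a pullback from $C$.

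The genuine gap is the second half. You descend to $C$ via $\FF_m=\varrho_*\OO_Y(m(K_{Y/C}+D_Y))$ and invoke Riemann--Roch plus nefness of $\FF_m$ to get $h^0\ge 2$. Your own computation shows this produces $h^0\ge \rank(\FF_m)\cdot q(Y)$, which is vacuous when $q(Y)=0$, and you explicitly defer that case to an unspecified ``orbifold $\P^1$'' argument. But $q(Y)=0$ is not a side case here: Lemma \ref{q} only shows $q(Y)\le 1$ when $K_Y+D_Y$ is not big, so the rational-base case is exactly the situation this lemma must handle. Until that case is closed, the argument does not prove the statement. The paper avoids the dichotomy entirely by a different device: after establishing $Nb\,L_Y\sim\varrho^*\Pi$, it replaces $L_Y$ by the $\Q$-linearly equivalent $L_Y'=H/(2g(C)+1)Nb$ with $H$ a disjoint union of $(2g(C)+1)Nb\deg\Pi$ general smooth fibres, notes that $(Y,D_Y+L_Y')$ is klt with coefficients in the $\DCC$ set $\A(b,N)\cup\{1/(2g(C)+1)Nb\}$ and that $K_Y+D_Y+L_Y'$ is big, and then re-runs the LMMP-plus-Langer argument of Lemma \ref{big} with boundary $D_Y+L_Y'$. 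That route makes no use of the genus of $C$ beyond it entering the $\DCC$ set, and so works uniformly for all $q(Y)$; in particular it does not need to first establish $h^0\ge 2$ and appeal to \cite[Th 4.6]{Ko86}. If you want to keep your direct-image strategy, you still owe a proof for $q(Y)=0$, and it is not at all clear that the pieces you list (pseudo-effective threshold bound, orbifold fibre) assemble into one; adopting the paper's ``replace $L_Y$ by a general member and rerun Lemma \ref{big}'' trick is considerably cleaner.
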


\begin{proof} Since $n(L_Y)=1$ the nef reduction is a map $\varrho: Y\to C$ to a curve.
It is an easy exercise (whose solution can be found in \cite[Proposition 2.11]{8aut})
to see that $\varrho$ is a morphism and that $L_Y$ is numerically equivalent to a positive multiple of the general fibre $\Sigma$ of $\varrho$. Since $L_Y . K_Y < 0$, one has $\Sigma . K_Y < 0$. Thus
$2g(\Sigma) - 2 = \Sigma . K_Y < 0$ and $\Sigma \cong \P^1$.
So $\varrho:Y \to C$ is a $\P^1$-fibration and $Y$ is a ruled surface
with $g(C) = q(Y)$. 

Moreover the divisor $NbL_Y$ on the ruled surface $Y \to C$ is numerically equivalent to 
some $\alpha \Sigma$. Considering the intersection of $\Sigma$ with a section of $\varrho:Y \to C$
one sees that $\alpha$ is an integer. The numerically trivial sheaf $NbL_Y-\alpha \Sigma$  
is linearly equivalent to
the pullback of a numerically trivial sheaf on a relative minimal model of $Y$ which in turn must be the pullback of a sheaf on $C$. Hence $NbL_Y \sim \varrho^*\Pi$ 
for some integral divisor $\Pi$ on $C$
of positive degree. Then $(2g(C)+1)\Pi$ is very ample and 
$(2g(C)+1)NbL_Y$ is linearly equivalent to the disjoint union $H$ of smooth fibres
in general position. For ($L_Y \sim_{\Q}$) $L'_Y=H/(2g(C)+1)Nb$ the pair $(Y, D_Y + L_Y')$ is $\klt$ and 
$K_Y + D_Y + L_Y'$ is big. The coefficients of $D_Y + L_Y'$ lie in the $\DCC$ set
$A(b, N) \cup \{1/(2g(C)+1)Nb\}$. Hence repeating the arguments used in the proof of Lemma \ref{big}
one finds a constant $M$ depending only on $b,N$ and $g(C)$ such that $|(s+1)(K_Y + D_Y + L_Y)|$
defines a birational map for all $s\geq M$ with $s+1$ divisible by $(2g(C)+1)Nb$.  
Now the lemma follows from Lemma \ref{form} (4).
\end{proof}

\begin{lemma}\label{q} Keeping the assumptions made in Lemma \rm{\ref{n=1}},
either $K_Y + D_Y$ is big or $q(Y) \leq 1$.
\end{lemma}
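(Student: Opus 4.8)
The plan is to argue by contradiction: assume $q(Y) \ge 2$ and show that $K_Y + D_Y$ is big (which is the alternative in the statement), or rather, show that under the assumptions of Lemma \ref{n=1} the case $q(Y) \ge 2$ forces $K_Y+D_Y$ to be big. We are in the situation where $\varrho : Y \to C$ is a $\P^1$-fibration onto a smooth curve of genus $q(Y) = g(C)$, and $L_Y \equiv \alpha \Sigma$ for a positive rational number $\alpha$, with $\Sigma$ a general fibre. The first step is to use that $K_Y + D_Y + L_Y$ is big (Lemma \ref{form} (3)), so that $K_Y + D_Y \equiv (K_Y + D_Y + L_Y) - \alpha \Sigma$; since subtracting a multiple of a fibre from a big divisor is harmless on the ``horizontal'' part, the only way $K_Y + D_Y$ can fail to be big is if it is not pseudo-effective, or pseudo-effective but of numerical dimension $\le 1$ with its Iitaka/numerical reduction being $\varrho$ itself.

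The key computation I would carry out is the intersection of $K_Y + D_Y$ (or its positive part) against $\Sigma$ and against a multisection, together with the adjunction/canonical bundle formula for the ruled surface $\varrho : Y \to C$. Restricting to a general fibre, $(K_Y + D_Y)\cdot \Sigma = K_Y \cdot \Sigma + D_Y \cdot \Sigma = -2 + D_Y \cdot \Sigma$, so if $D_Y$ meets the general fibre in total coefficient $> 2$ then $K_Y + D_Y$ is already ``fibrewise big'' and, combined with $q(Y) \ge 2 > 0$ giving positivity in the base direction from $\varrho^* K_C$ (via the relative canonical formula $K_Y = \varrho^* K_C + (\text{relative part})$), one gets that $K_Y + D_Y$ is big. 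So I may assume $D_Y \cdot \Sigma \le 2$. Then I would push the divisor $K_Y + D_Y$ down: writing $K_Y + D_Y \equiv \varrho^*(K_C) + (\text{vertical correction}) + D_Y^{hor}$ and using $g(C) \ge 2$ so that $\deg K_C \ge 2 > 0$, the pullback $\varrho^* K_C$ is nef with positive degree on sections; combined with the bigness of $K_Y + D_Y + L_Y$ and $L_Y \equiv \alpha\Sigma$ being \emph{vertical}, a standard argument (a big divisor minus a vertical nef divisor stays big provided the horizontal part is untouched, which it is here since $L_Y$ is vertical) shows $K_Y + D_Y$ is big — contradicting $\kappa(K_Y + D_Y) \le 1$ from Assumption \ref{ass} unless we are not in that assumed case, i.e. exactly yielding the dichotomy claimed.

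I expect the main obstacle to be the case $q(Y) \ge 2$ with $D_Y \cdot \Sigma \le 2$ \emph{and} the horizontal part of $D_Y$ being small or zero, where one cannot immediately conclude fibrewise bigness; here the positivity must come entirely from the base, i.e. from $\varrho^* K_C$ plus possibly $\varrho$-positive contributions in the canonical bundle formula for the ruling. The delicate point is to ensure that $K_Y + D_Y$, written as $\varrho^*(\text{something of positive degree on } C) + (\text{effective/nef vertical and horizontal terms})$, is genuinely big and not just pseudo-effective of numerical dimension one — this is where the hypothesis $g(C) \ge 2$ (rather than $\ge 1$) is essential, since for a ruled surface $K_Y \cdot \Sigma = -2$ needs to be outweighed, and $\deg K_C = 2g(C) - 2 \ge 2$ suffices to make $K_Y + \varrho^*(\text{ample of degree} \ge 2)$ contribute, using that $Y \to C$ has a section or multisection against which one tests nefness. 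Concretely, I would invoke the elementary fact that on a ruled surface over a curve of genus $\ge 2$, $K_Y$ plus the pullback of a sufficiently positive divisor on the base, plus any effective divisor, is big, which closes the argument and forces $q(Y) \le 1$ whenever $K_Y + D_Y$ is not big.
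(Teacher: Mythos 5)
Your strategy — assume $q(Y)\ge 2$ and show $K_Y+D_Y$ is big — is logically equivalent to the statement but, as an actual proof route, it has a fundamental gap: in the case you identify as delicate ($D_Y\cdot\Sigma\le 2$, horizontal part of $D_Y$ small), $K_Y+D_Y$ can have $(K_Y+D_Y)\cdot\Sigma<0$ and so is \emph{not even pseudo-effective}, let alone big. No amount of ``positivity in the base direction'' from $\varrho^*K_C$ (or from the invoked fact about ruled surfaces over a genus $\ge 2$ base) can overcome a negative fibre degree, since $\Sigma$ is a movable curve. So proving bigness of $K_Y+D_Y$ is simply false in that range, and the ``elementary fact'' you quote — that $K_Y$ plus a sufficiently positive pullback plus any effective divisor is big — is not true without a lower bound on the horizontal degree of that effective divisor. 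In this regime what one must do instead is show directly that $q(Y)\le 1$, which is exactly what the paper does: when $K_Y+D_Y$ is not pseudo-effective it invokes Proposition \ref{threshold}, whose two cases produce either a klt del Pezzo surface of Picard number one (rational) or a second $\P^1$-fibration with fibre $\Gamma$ satisfying $\Gamma\cdot\Sigma>0$, so $Y$ is rational; either way $q(Y)=0$.

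Even in the remaining case where $K_Y+D_Y$ is pseudo-effective with $\kappa(K_Y+D_Y)\le 1$, your sketch doesn't close. The paper's argument here is not a bigness argument at all: it runs the log MMP on $(Y,D_Y)$ to get a fibration $Y\to Z\to B$ with $\dim B\le 1$, and then uses the bigness of $K_Y+D_Y+L_Y$ together with $L_Y\equiv\alpha\Sigma$ to force the $\psi$-fibres (or the exceptional curves when $\dim B=0$, all of which are $\P^1$'s) to meet $\Sigma$ positively, hence to dominate the base curve $C$; this dominating curve is $\P^1$ or elliptic, so $g(C)=q(Y)\le 1$. The essential mechanism is thus a domination argument for $C$, not a positivity-on-the-base argument for $K_Y+D_Y$. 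You should restructure the proof around showing $q(Y)\le 1$ when $K_Y+D_Y$ fails to be big, using Proposition \ref{threshold} for the non-pseudo-effective case and the LMMP/abundance output for the pseudo-effective case, rather than trying to force bigness.
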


\begin{proof}
If $K_Y + D_Y$ is not pseudo-effective we can apply Proposition \ref{threshold} f).
There, in Case (1) the irregularity is zero. So we only have to consider Case (2).
Using the notations introduced there, 
$K_Y + D_Y + L_Y \equiv (1-e)L_Y + \beta \Gamma + E_{\tau \sigma}$
is big,
$\Gamma . L_Y > 0$ and hence, using the notation from Lemma \ref{n=1}, $\Gamma . \Sigma > 0$.
Further $\Gamma \cong \P^1$.
%
So there are two different $\P^1$-fibrations on $Y$ with fibres $\Gamma$ and $\Sigma$, 
and $Y$ is rational.

Therefore, we may assume that $K_Y + D_Y$ is pseudo-effective with $\kappa(K_Y + D_Y) \le 1$. 
Applying Construction \ref{LMMP} to the $\klt$-pair $(Y, D_Y)$,
we get morphisms $\gamma:Y\to Z$ and $\psi_Z:Z\to B$ with $\gamma$ birational, and an ample 
$\Q$-divisor $H$ on $B$ such that
$$
E_{\gamma}=K_Y + D_Y - \gamma^*\psi_Z^*(H)
$$
is an effective $\gamma$-exceptional $\Q$-divisor consisting of rational curves. By the assumption
$$ \dim B = \kappa(H) = \kappa(K_Y + D_Y) \leq 1.$$

\par
Consider the case $\dim B = 1$. So $\psi = \psi_Z \circ \gamma : Y \to B$ is a family of curves 
over a curve with general fibre $\Gamma$. By abuse of notation $\Gamma$ will also be 
considered as the general fibre of $\psi_Z$. For $\alpha  = \deg H$, one has
$$K_Y + D_Y \sim_{\Q} \, \equiv \alpha \Gamma + E_{\gamma} \mbox{ \ \ and \ \ } 
K_Z + D_Z \equiv \alpha \Gamma .$$
Since $E_{\gamma}$ is contained in fibres 
$$0 = \Gamma . (\alpha \Gamma + E_{\gamma}) = \Gamma . (K_Y + D_Y) \ge \Gamma . K_Y$$
and $\Gamma$ is either $\P^1$ or an elliptic curve.

Since $K_Y + D_Y + L_Y \equiv \alpha \Gamma + E_{\gamma} + L_Y$ is big,
$\Gamma . L_Y > 0$. Using the notations from Lemma \ref{n=1}, this implies that $\Gamma . \Sigma > 0$ where $Y$ is ruled over $C$ with general fibre $\Sigma$. So $\Gamma$ dominates the base curve $C$
and $q(Y)=g(C)\leq 1$, as claimed.

In case $\dim B = 0$ one has $K_Z + D_Z \equiv 0$ (indeed, $\sim_{\Q} 0$ by \cite{Ko+}).
If $L_Y . E_{\gamma} = 0$, then $K_Y + D_Y + L_Y \equiv L_Y + E_{\gamma}$ is the
Zariski decomposition and hence 
$$2 = \kappa(K_Y + D_Y + L_Y) = \kappa(L_Y) \le n(Y) = 1,$$
a contradiction.

Thus $L_Y . E_{\gamma} > 0$ and, using the notations from Lemma \ref{n=1},
one finds $\Sigma . E_{\gamma} > 0$. The divisor $E_{\gamma}$ is exceptional for
the birational morphism to the $\klt$ surface $Z$, whence all its components are isomorphic to $\P^1$. 
Since one of them intersects $\Sigma$, the base curve $C$ in Lemma \ref{n=1} is dominated by $\P^1$
and hence $g(C)=q(Y) = 0$.
\end{proof}

\begin{proof}[Proof of Theorem \rm{\ref{ThA}}]
As recalled at the  beginning of this section it remains to verify the theorem
under Assumption \ref{ass}. Then the theorem follows from
Lemmata \ref{LK}, \ref{n=1}, and \ref{q}, using Lemmata \ref{e1} and \ref{e2}.
\end{proof}

\begin{proof}[Proof of Corollary \rm{\ref{CoB}}]
When $\kappa(X) = 0$, one can take $M_3$ to be the Beauville number
as in \cite[\S 10]{Mo}. When $\kappa(X) = 1$, the result is just \cite[Corollary 6.2]{FM}.
When $\kappa(X) = 3$, we can take $M_3 = 77$ by \cite[Th 1.1]{CC} (see also \cite{HM}, \cite{Ta}).
So the only remaining case is the one where $\kappa(X) = 2$.
Here the corollary follows from Theorem \ref{ThA}
for $n = 3$, $b = 1$, $B_{n-2} = 2$ and $N = N(B_{n-2})= 12$.
\end{proof}

\section{Some comments}\label{com}

\begin{remark}\label{end}
Although the arguments used in Sections \ref{log} and \ref{prf} are formulated 
just for surfaces $Y$ some can be easily extended to the higher dimensional case.
In particular, the general minimal model program in \cite{BCHM} extends the Zariski decomposition
for pseudo-effective divisors to the case $\dim(Y)>2$.

However, as pointed out by the referee, there is no replacement for the fact that on surfaces the direct image of a nef divisor is nef. Similarly, the proof of Claim \ref{3.4}, essential for 
Lemma \ref{LK} is done ``case by case''. In the ``worse case'' \eqref{worse} 
one can not extract any positivity from $L_Y$, nor from $K_Y +\sigma' D_W$. One has to get the positivity
from exceptional components of $D_Y$. Again, there is little hope to do something similar
for $\dim(Y)>2$.  
\end{remark}
\begin{remark}\label{rem2.1} 
If the general fibre $F$ of the Iitaka fibration is a good minimal model, and hence if
$\omega_F^b\cong \OO_F$, choose an ample invertible
sheaf $\AA$ on $X$. Writing $h$ for the Hilbert polynomial of $\AA|_F$ one obtains a morphism
from the complement $Y_0$ of the discriminant locus to the moduli scheme
$M_h$ of polarized minimal models with Hilbert polynomial $h$. In \cite{Vi06} 
we constructed a compactification $\overline{M}_h$ of $M_h$ and a nef $\Q$-Cartier divisor $\lambda$ on $\overline{M}_h$,
which on each curve meeting $M_h$ corresponds to the semistable part. Moreover, $\lambda$ is 
ample with respect to $M_h$. In different terms, there is an effective $\Q$-Cartier divisor $\Gamma$
on $\overline{M}_h$, supported in $\overline{M}_h\setminus M_h$, such that $\alpha\lambda-\Gamma$ is ample
for all $\alpha\geq 1$.

If we choose $Y$ such that $Y_0\to M_h$ extends to a morphism $\varphi:Y\to \overline{M}_h$, one finds that
$L_Y=\varphi^*(\lambda)$, and hence that $\alpha L_Y-\varphi^*(\Gamma)$ is 
semi-ample. For some constant $C>0$, depending only on $h$, both $CL_Y$ and 
$C(L_Y-\varphi^*(\Gamma))$ are divisors, and
choosing $C$ large enough $C(\alpha L_Y-\varphi^*(\Gamma))$ will have lots of global sections for
all $\alpha\in \Z_{>0}$ (provided that $L_Y$ is not numerically trivial). 
Perhaps this allows us to answer Question \ref{quest} 
in the affirmative, assuming the existence of good minimal models. 
Perhaps to this aim one has to compare the image of the map $\varphi:Y\to \overline{M}_h$
with the log minimal model of $Y$.

Anyway, one still would need some argument which guarantees the independence 
of the constant $M$ from the Hilbert polynomial $h$.
\end{remark}

\begin{remark}\label{rem2.2}
Gianluca Pacienza \cite{Pa} recently gave an affirmative answer to Question \ref{quest}
for $\kappa=n-2$, or more precisely if the general fibre $F$ of the Iitaka fibration has a good 
minimal model, assuming that $Y$ is non-uniruled and that the morphism
$Y_0\to M_h$ in Remark \ref{rem2.1} is generically finite over its image.
Note that the last assumption implies that $L_Y$ is big.
\end{remark}

Finally let us give a direct proof of the Corollary \ref{CoB}, without referring to Theorem \ref{ThA},
but using the existence of good minimal models in dimension three:

\begin{proof}[Proof of Corollary \rm{\ref{CoB}}, using the existence of minimal models] \ \\
As before all cases are known, except the one where $\kappa(X)=2$.
Assume that $X$ is a good minimal threefold. 
For some $m \gg 0$ the morphism $\sigma: X \to S$ associated with $|mK_X|$ has connected fibres and,
by the Abundance theorem for threefolds, $K_X \sim_{\Q} \, \sigma^* G$ for an ample $\Q$-Cartier divisor $G$.

As in \cite[Proof of Corollary (0.4)]{Na} $(S, \Delta)$ is $\klt$ for the effective $\Q$-divisor
$$
\Delta := \frac{1}{12} H' + \sum_j a_j D_j' + \sum_i (1 - \frac{1}{m_i}) \Gamma_i' \mbox{ \ \ and \ \ }
K_X \sim_{\Q} \, \sigma^*(K_S + \Delta).
$$
Here $H', D_j', \Gamma_i'$ stand for the divisors $\mu_*H$, $\mu_*D_j$, $\mu_*\Gamma_i$ in the notation of \cite{Na}. Moreover
$$a_j \, \in \, K_2 := \, \{
\frac{1}{2}, \,
\frac{1}{6}, \,
\frac{5}{6}, \,
\frac{1}{4}, \,
\frac{3}{4}, \,
\frac{1}{3}, \,
\frac{2}{3}
\}.$$
We remark that by \cite[Th 3.5.2]{KM}
$S$ is the canonical model, denoted by $W$ in Lemma \ref{canmod}, and that $\Delta= L_W + D_W$.
Note that $\Delta$ has coefficients in the $\DCC$ set
$$\Ell := \{1 - \frac{1}{m} \, | \, m \in \Z_{\ge 2}\} \, \cup \, K_2 \, \cup \, \{ \frac{1}{12}\}.
$$
By \cite[Th 5.4]{La} there exists a computable constant $M$, depending only on the $\DCC$ set
$\Ell$, such that the adjoint linear system $|K_S + \lceil t(K_S + \Delta) \rceil|$
gives rise to a birational map for all $t \ge M$.
This adjoint divisor is smaller than or equal to $(t+1) (K_S + \Delta)$ provided that $12 | t$.
So $\Phi_{|(t+1)(K_S + \Delta)|}$ is birational and hence $\Phi_{|(t+1)K_X|}$ an Iitaka fibration
(see Lemma \ref{form} (4)). 
\end{proof}

\end{document}